\documentclass[11pt]{amsart}
\usepackage{amsmath}
\usepackage{amssymb}
\usepackage{amsfonts}
\usepackage{amsthm}
\usepackage{mathrsfs}
\usepackage{comment}
\usepackage[normalem]{ulem}
\usepackage{enumerate}
\usepackage[
textwidth=405pt,
textheight=615pt,
hmarginratio=1:1,
vmarginratio=1:1]{geometry}
\usepackage{mathtools}
\mathtoolsset{showonlyrefs}
\usepackage{hyperref}
\hypersetup{colorlinks=true,
linktoc=all,linkcolor=black,
citecolor=black}

\newcommand{\id}{\operatorname{\text{\bf I}}}

\newcommand{\C}{\mathbb{C}}
\newcommand{\D}{\mathbb{D}}
\newcommand{\T}{\mathbb{T}}
\newcommand{\Te}{\mathbb{T}}

\newcommand{\Z}{\mathbb{Z}}
\newcommand{\R}{\mathbb{R}}

\newcommand{\calU}{\mathcal{U}}
\newcommand{\calY}{\mathcal{Y}}

\newcommand{\diff}{\mathrm{d}}

\newcommand{\Rop}{\mathbf{R}}

\newcommand{\Jop}{\mathbf{J}}
\newcommand{\Lop}{\mathbf{L}}

\newcommand{\e}{\mathrm{e}}
\newcommand{\Ordo}{\mathrm{O}}
\newcommand{\ordo}{\mathrm{o}}
\newcommand{\imag}{\mathrm{i}}
\renewcommand{\Re}{\mathrm{Re}\,}

\newtheorem{thm}{Theorem}[section]
\newtheorem{cor}[thm]{Corollary}
\newtheorem{lem}[thm]{Lemma}

\newtheorem{prop}[thm]{Proposition}

\theoremstyle{definition}
\newtheorem{defn}[thm]{Definition}

\newtheorem{prob}[thm]{Problem}

\theoremstyle{remark}
\newtheorem{rem}[thm]{Remark}

\usepackage{graphicx}

\numberwithin{equation}{section}
\title[Liouville phenomenon for the Klein-Gordon equation in $1+1$ dimensions]
{Liouville phenomenon for the Klein-Gordon equation in $1+1$ dimensions}
\makeatletter
\let\@wraptoccontribs\wraptoccontribs
\makeatother

\begin{document}

\author[Haakan Hedenmalm]{Haakan Hedenmalm
}

\address{
Department  of Mathematics and Computer Sciences
\\
St. Petersburg State University
\\
St. Petersburg
\\
RUSSIA
\\
Beijing Institute for Mathematical Sciences and Applications
\\
Huairou District, Beijing 101408
\\
CHINA
\\
Department of Mathematics and Statistics
\\
University of Reading
\\
Reading
\\
U.K.
}
\email{haakan00@gmail.com}

\keywords{Liouville theorem, Klein-Gordon equation, Phragm\'en-Lindel\"of
principle, hyperbolic equation}

\subjclass{Primary 35L10, 35Q40, 35B53}

\date{\today}

\begin{abstract} 
We study the Klein-Gordon equation in one spatial and one temporal dimension.
Physically, this equation describes the wave function of a relativistic spinless
boson with positive rest mass. Mathematically, this is the most elementary
hyperbolic partial differential equation, after the wave equation itself.
Relative to the point at the origin, the spacetime splits according to the
light cones, and we find four quarter-planes, two of which are timelike while
the remaining two are spacelike.
Not unexpectedly, the solutions behave quite differently in the two types of
quarter-planes. It turns out that the spacelike quarter-planes exhibit a
Liouville phenomenon, where insufficient growth forces the solution to display
a certain kind of symmetry, where the values along the two linear edges are
in a one-to-one relation. This phenomenon shares features with the classical
Liouville theorem in the theory of entire functions as well as the
Phragm\'en-Lindel\"of principle for harmonic functions.
\end{abstract}

\maketitle


\section{Introduction}

\subsection{Classical backgrund}
The Klein-Gordon equation describes the wavefunction of a relativistic
spinless boson with positive rest mass. In one spacial and one temporal
dimension, it is advantageous to swith to using the characteristic directions
to define alternative coordinates, which after rescaling gives us the equation
\begin{equation}
u''_{xy}+u=0,
\label{eq:KG-1}
\end{equation}
where $u(x,y)$ is the wavefunction in question. The characteristic directions
are now the lines parallel to the $x$-axis as well as those parallel to
the $y$-axis. 
In this context, the corresponding wave equation reads $u''_{xy}=0$ with general
solution of the form $u(x,y)=f(x)+g(y)$. Here, in principle, the solution
exhibits no extra smoothness and if we look for distributional solutions,
then $f,g$ should be thought of as arbitrary distributions of one variable.
We shall be concerned mainly with \emph{continuous solutions} $u$ to the wave
equation as well as to the Klein-Gordon equation. In the context of the wave
equation with general solution $u(x,y)=f(x)+g(y)$, the univariate functions
$f,g$ are then continuous as well.
The \emph{Lorentz group} describes the linear transformations of the
$(x,y)$-plane which preserve the shape of the Klein-Gordon equation. In our
simplified $1+1$ dimensional context, it consists of the $2\times2$ matrices
\[
A=
\begin{pmatrix}
\lambda & 0\\
0&\lambda^{-1}
\end{pmatrix}\quad\text{or}\quad
\begin{pmatrix}
0 & \lambda\\
\lambda^{-1}&0
\end{pmatrix},
\]
where $\lambda\in\R$, $\lambda\ne0$. In addition to such linear Lorentz
transformations, the Klein-Gordon equation is also invariant under all
translations $(x,y)\mapsto (x+a,y+b)$, for $(a,b)\in\R^2$, which entails that
the equation is invariant under the action of the semidirect sum of the
translation group and the Lorentz group, called the \emph{Poincar\'e group}. 

It is natural to consider the \emph{Darboux-Goursat problem} for such
hyperbolic equations. For the Klein-Gordon equation, the Darboux-Goursat
problem associated with the given characteristic axes reads
\begin{equation}
\begin{cases}
u''_{xy}+u=0,\\
u(x,0)=f(x),
\quad u(0,y)=g(y).  
\end{cases}    
\label{eq:KG+Goursat-1}
\end{equation}
Here, given that we ask for a continuous solution $u$, we should require that
$f,g$ be continuous with $f(0)=g(0)$. 
As a side note, we should mention that a discretized version of the 
Darboux-Goursat problem \eqref{eq:KG+Goursat-1} was considered recently in 
\cite{BHM}, for a restricted class of bounded solutions $u$. 
Equations of the type \eqref{eq:KG+Goursat-1} were of interest to Bernhard 
Riemann, and for this reason the solution is associated with his name. Before
we come to that, we need to introduce certain Bessel-type functions.
For $a=0,1,2,\ldots$, the \emph{bivariate Bessel function} 
\[
J_{0,a}(y,x):=J_{a,0}(x,y):=\sum_{j=0}^{+\infty}\frac{(-1)^j}{(j+a)!j!}\,x^{j+a}y^{j}
=\bigg(\frac{x}{y}\bigg)^{a/2}J_a(2\sqrt{xy})
\]
is an entire function of two variables, where on the right-hand side the function
$J_a$ denotes the usual Bessel function. We quickly verify that
\begin{equation*}
\partial_x\partial_y J_{a,0}(x,y)+J_{a,0}(x,y)=0,\qquad \text{if}\,\,\,a=0
\,\,\,\text{or}\,\,\,b=0,  
\end{equation*}
so that these functions constitute solutions to the Klein-Gordon equation. Moreover, 
since we have agreed that $J_{a,0}(x,y)=J_{0,a}(y,x)$, the function $J_{0,a}(x,y)$
solves the Klein-Gordon equation as well.  
Note that differentiation has the effect
\[
\partial_x J_{a,0}(x,y)=J_{a-1,0}(x,y),\qquad a\ge1,
\]
while
\[
\partial _x J_{0,0}(x,y)=-J_{0,1}(x,y),
\]
and, similarly, we have
\[
\partial_y J_{a,0}(x,y)=-J_{a+1,0}(x,y),\qquad a\in\Z_{\ge0}.
\]

Finally, we note that restriction to the lines $x=0$ and $y=0$ gives 
\[
J_{0,0}(0,y)=1,\quad J_{a,0}(0,y)=0\,\,\,\text{if}\,\,\,a\ge1,
\]
and
\[
J_{a,0}(x,0)=\frac{x^a}{a!}.
\]

\begin{defn}
\emph{Riemann's solution} to the Darboux-Goursat problem is given by the 
expression
\begin{multline}
\Rop[f,g](x,y):=f(x)+g(y)-f(0)J_{0,0}(x,y)-\int_0^x J_{0,1}(x-s,y)f(s)\,\diff s
\\
-\int_0^y J_{1,0}(x,y-t)g(t)\,\diff t.  
\end{multline}  
\end{defn}

We now make some observations which are classical.
To begin with, we note that
\begin{equation}
\Rop[f,g](x,0)=f(x)+g(0)-f(0)J_{0,0}(x,0)-\int_0^x J_{0,1}(x-s,0)f(s)\,\diff s
=f(x)  
\label{eq:Goursat-1}
\end{equation}
since $f(0)=g(0)$, $J_{0,0}(x,0)=1$, and $J_{0,1}(x-s,0)=0$. By interchanging 
the roles of $x$ and $y$ in the calculation, we find likewise that
\begin{equation}
\Rop[f,g](0,y)=f(0)+g(y)-f(0)J_{0,0}(0,y)-\int_0^y J_{1,0}(0,y-t)g(t)\,\diff t
=g(y).
\label{eq:Goursat-2}
\end{equation}
It now follows that $u=\Rop[f,g]$ has the correct boundary data along the
axes $x=0$ and $y=0$.

\begin{thm}
{\rm (Riemann)}
Provided that $f,g$ are continuous univariate functions with $f(0)=g(0)$, the
function $u(x,y)=\Rop[f,g](x,y)$ is a continuous function of two variables,
which solves the Darboux-Goursat problem \eqref{eq:KG+Goursat-1} in the sense
of distribution theory.
\label{thm:classical-1}
\end{thm}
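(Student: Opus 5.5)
Since the boundary values were already checked in \eqref{eq:Goursat-1} and \eqref{eq:Goursat-2}, it remains to show two things: that $u=\Rop[f,g]$ is continuous, and that $\partial_x\partial_y u+u=0$ holds in $\mathcal{D}'(\R^2)$.

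Continuity is the easy part. The terms $f(x)$ and $g(y)$ are continuous. Each integral has the form $\int_0^x K(x-s,y)f(s)\,\diff s$ with $K$ entire, hence locally uniformly continuous, and $f$ locally bounded. Dominated convergence, together with continuity of the upper limit, then gives joint continuity in $(x,y)$.

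For the equation, the plan is to reduce to smooth data by density. First assume $f,g\in C^\infty(\R)$ with $f(0)=g(0)$, and compute $\partial_x\partial_y u$ classically.
\begin{itemize}
\item The term $f(x)+g(y)$ has vanishing mixed derivative.
\item The term $-f(0)J_{0,0}$ satisfies the equation by the identity for $J_{a,0}$ recorded above.
\item For $I_1(x,y)=\int_0^x J_{0,1}(x-s,y)f(s)\,\diff s$, use $\partial_y J_{0,1}(x,y)=J_{0,0}(x,y)$ (the symmetric version of $\partial_xJ_{1,0}=J_{0,0}$) and $J_{0,1}(0,y)=y$. This gives $\partial_y I_1=\int_0^x J_{0,0}(x-s,y)f(s)\,\diff s$. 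Then, with $J_{0,0}(0,y)=1$ and $\partial_xJ_{0,0}=-J_{0,1}$, we get $\partial_x\partial_y I_1=f(x)-I_1$.
\item Symmetrically, $\partial_x\partial_y I_2=g(y)-I_2$ for $I_2=\int_0^y J_{1,0}(x,y-t)g(t)\,\diff t$.
\end{itemize}
Combining these,
\[
\partial_x\partial_y u=f(0)J_{0,0}-(f-I_1)-(g-I_2)=-u,
\]
as required. (One can also integrate by parts to move derivatives onto $f$, but differentiating under the integral is enough here.)

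For general continuous $f,g$, pick smooth $f_n\to f$ and $g_n\to g$ locally uniformly, and adjust by constants so that $f_n(0)=g_n(0)$, for instance by replacing $g_n$ with $g_n-g_n(0)+f_n(0)$. The explicit formula shows $\Rop[f_n,g_n]\to\Rop[f,g]$ locally uniformly, hence in $\mathcal{D}'$. Since $\partial_x\partial_y+1$ is continuous on $\mathcal{D}'$, the identity $(\partial_x\partial_y+1)\Rop[f_n,g_n]=0$ passes to the limit.

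I expect the main obstacle to be the bookkeeping in the smooth computation: getting the signs and boundary terms right in the differentiation of $I_1$ and $I_2$, especially the values $J_{0,1}(0,y)=y$ and $J_{1,0}(x,0)=x$, which have to be tracked so that the terms $f(x)$ and $g(y)$ cancel exactly. The approximation step is routine.
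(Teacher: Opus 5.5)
Your proposal is correct and follows the paper's proof. Both use the same term-by-term computation of $\partial_x\partial_y u$ via the Bessel identities and the Leibniz rule, with the boundary terms producing $f(x)$ and $g(y)$ that cancel against those in $u$. The differences are minor: the paper needs no smoothing step, since it notes that $\partial_x\partial_y(f(x)+g(y))=0$ holds distributionally and that the Leibniz computation for the integral terms already works for merely continuous $f,g$. The paper also differentiates $I_1$ in $x$ first, which is where $J_{0,1}(0,y)=y$ enters; in your order that value is never used.
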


To offer a cohesive reader-friendly presentation, we supply the proof of this
theorem in the appendix (Section \ref{sec:appendix-1}). 
Riemann's contribution to the Darboux-Goursat problem with data along 
characteristics can be found in his collected works \cite{Riemann} (the paper 
\emph{Ueber die Fortplantzung ebener Luftwellen von endlicher Schwingungsweite} 
is the most relevant), and for this reason, the method
is called the \emph{Riemann method}.

It remains to determine that Riemann's solution is the unique solution to the
Darboux-Goursat problem \eqref{eq:KG+Goursat-1}. To this end,
it is an important observation that continuous distributional solutions to
the Klein-Gordon equation $u''_{xy}+u=0$ are characterized by the
\emph{quadrature identity}
\begin{equation}
\int_a^{a'}\int_{b}^{b'}u(s,t)\,\diff t\diff s=u(a',b)+u(a,b')-u(a',b')-u(a,b).
\label{eq:quadrature-1}  
\end{equation}
If we apply \eqref{eq:quadrature-1} to $a'=x,b'=y,a=b=0$, we may view the
equation as a way to represent $u(x,y)$ in terms of the values $u(x,0)$, $u(0,y)$,
$u(0,0)$, together with a Volterra-type integral:
\begin{equation}
u(x,y)=u(x,0)+u(0,y)-u(0,0)-\int_0^{x}\int_{0}^{y}u(s,t)\,\diff t\diff s.
\label{eq:quadrature-1.1}  
\end{equation}
Based on this representation, the Picard method applies to give uniqueness. 

\begin{thm}
{\rm (Picard's method)}
Suppose $u(x,y)$ is a continuous solution of the Darboux-Goursat problem
\eqref{eq:KG+Goursat-1}, where the Klein-Gordon equation is understood in the
sense of distribution theory, and the univariate functions $f,g$ are continuous
with $f(0)=g(0)$. Then $u$ coincides with Riemann's solution: $u=\Rop[f,g]$. 
\label{thm:classical-2}
\end{thm}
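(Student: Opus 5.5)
The plan is to reduce uniqueness to the vanishing of the difference of two solutions, and then kill that difference with Picard iteration on the quadrature representation \eqref{eq:quadrature-1.1}.

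Put $v:=u-\Rop[f,g]$. By Theorem \ref{thm:classical-1}, $\Rop[f,g]$ is a continuous distributional solution with boundary data $f,g$, so $v$ is continuous and solves $v''_{xy}+v=0$ in the distributional sense. It also satisfies $v(x,0)=0$ and $v(0,y)=0$ for all real $x,y$, by \eqref{eq:Goursat-1} and \eqref{eq:Goursat-2}.

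The first step is to justify the quadrature identity \eqref{eq:quadrature-1} for continuous distributional solutions. Pair the equation with test functions that approximate the indicator of the rectangle $[a,a']\times[b,b']$. Take $\phi_\epsilon(s,t)=\chi_\epsilon(s)\psi_\epsilon(t)$, where $\chi_\epsilon,\psi_\epsilon$ are smooth trapezoids tending to the indicators of $[a,a']$ and $[b,b']$. Then $\langle v,\partial_s\partial_t\phi_\epsilon\rangle$ becomes an average of $v$ against $\chi_\epsilon'\otimes\psi_\epsilon'$. Since $\chi_\epsilon'\otimes\psi_\epsilon'$ concentrates as signed point masses at the four corners, continuity of $v$ gives the corner combination in the limit. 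Keeping track of the signs then yields \eqref{eq:quadrature-1}, and hence \eqref{eq:quadrature-1.1}. For $v$ this reads
\[
v(x,y)=-\int_0^x\int_0^y v(s,t)\,\diff t\diff s,
\]
with the integrals taken in the oriented sense, so the identity holds in all four quadrants.

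Next comes the Picard step. Fix $R>0$ and let $M:=\max\{|v(s,t)|:|s|,|t|\le R\}$, which is finite by continuity. Induction on $n$ using the displayed identity gives
\[
|v(x,y)|\le M\,\frac{|x|^n|y|^n}{(n!)^2},\qquad |x|,|y|\le R.
\]
Indeed, inserting the bound at stage $n$ into the integral produces $M|x|^{n+1}|y|^{n+1}/((n+1)!)^2$. Letting $n\to\infty$ gives $v\equiv0$ on $[-R,R]^2$, and since $R$ is arbitrary, $u=\Rop[f,g]$.

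The main obstacle is the first step, namely the careful derivation of \eqref{eq:quadrature-1} from the distributional equation for a function that is merely continuous. The iteration itself is routine. If the mollifier argument gets messy, I would first convolve $v$ with a smooth approximate identity. The mollified function $v_\delta$ is a classical smooth solution of the same equation, because the equation has constant coefficients. For $v_\delta$, \eqref{eq:quadrature-1} follows from integrating $\partial_s\partial_t v_\delta=-v_\delta$ over the rectangle by the fundamental theorem of calculus. Since $v_\delta\to v$ locally uniformly, I would then pass to the limit $\delta\to0$.
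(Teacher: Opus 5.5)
Your proof is correct, but it runs the Picard step differently from the paper. The paper takes the quadrature identity as a known characterization of continuous distributional solutions and does not prove it. Your mollification argument, or your corner test-function argument, supplies that justification; the signs come out right, and so does the extension to all four quadrants by antisymmetry of oriented integrals.

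For the uniqueness itself, the paper uses only the one-step bound $|v(x,y)|\le|xy|\,\|v\|_{L^\infty(Q)}$. This is a contraction that forces $v=0$ on a small rectangle $Q\ni(0,0)$ with $\max\{|a|,a'\}\max\{|b|,b'\}<1$. The vanishing is then propagated by re-basing the Volterra identity at translated points, first horizontally to fill a strip and then vertically to cover the plane.

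Your iterated estimate $|v(x,y)|\le M|x|^n|y|^n/((n!)^2)$ on $[-R,R]^2$ removes that continuation bookkeeping entirely. The factorial decay beats $R^{2n}$ for any fixed $R$, so a single argument handles arbitrarily large squares. It applies verbatim to any rectangle containing the origin, which is exactly the setting of Theorem \ref{thm:Picard-2}. The paper's contraction is a marginally shorter estimate, but it is only local and pays for that with the translation argument.
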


The proof of this known theorem is supplied for completeness in the appendix
(Section \ref{sec:appendix-1}).

\subsection{Finite speed propagation and uniqueness}
The Klein-Gordon equation is relativistic in the sense that the speed of light
$c$ is a finite positive constant, which in our modelling is set to $c=1$.
Let us see what this means in terms of Riemann's solution $u=\Rop[f,g]$.
So, we suppose that the continuous data $f,g$ in the Darboux-Goursat problem
\eqref{eq:KG+Goursat-1} vanishes initially:
\[
f(t)=0 \,\,\,\text{for all}\,\,\,t\in[a,a'],\quad
g(t)=0\,\,\,\text{for all}\,\,\, t\in[b,b'],  
\]
for some reals $a\le0\le a'$ and $b\le0\le b'$. It then follows by inspection
of Riemann's solution that
\[
u(x,y)=\Rop[f,g](x,y)=0,\qquad (x,y)\in[a,a']\times[b,b'].
\]
This has the interpretation that information travels at finite speed, and that
the values on the rectangle $[a,a']\times[b,b']$ are fully determined by the
values on the union of the characteristic segments $[a,a']\times\{0\}$ and
$\{0\}\times[b,b']$. Nevertheless, this presupposes that $u$ is a global
continuous solution to the Klein-Gordon equation. What if $u$ is just a
continuous function on the rectangle $[a,a']\times[b,b']$ that 
that solves the Klein-Gordon equation in the interior in the sense of
distribution theory? The Picard argument still applies to give uniqueness:

\begin{thm}
{\rm (Picard's method)}
Suppose $a\le 0\le a'$ and $b\le 0\le b'$, and that $u$ is a continuous
function on the closed rectangle $[a,a']\times[b,b']$ which solves the
Klein-Gordon equation in the interior of the rectangle in the sense of
distribution theory. If
\[
u(x,0)=u(0,y)=0,\qquad x\in[a,a'],\,\,\,y\in[b,b'],
\]
then $u=0$ holds on $[a,a']\times[b,b']$. 
\label{thm:Picard-2}
\end{thm}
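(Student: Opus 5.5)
We reduce to the quadrature identity \eqref{eq:quadrature-1}, restricted to the rectangle, and then run a Picard iteration, that is, a Gronwall-type estimate for a Volterra integral operator.

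\emph{Step 1: the quadrature identity on subrectangles.} We first establish \eqref{eq:quadrature-1} for every subrectangle $[\alpha,\alpha']\times[\beta,\beta']$ contained in the open rectangle $(a,a')\times(b,b')$. To do this, pair the distributional equation $u''_{xy}+u=0$ with test functions $\phi_\epsilon(x)\psi_\epsilon(y)$. Here $\phi_\epsilon,\psi_\epsilon$ are smooth approximations of the indicators $1_{[\alpha,\alpha']}$ and $1_{[\beta,\beta']}$, with compact support inside the open rectangle. Then
\[
\langle u''_{xy},\phi_\epsilon\psi_\epsilon\rangle=\iint u\,\phi_\epsilon'\psi_\epsilon'.
\]
The derivatives $\phi'_\epsilon,\psi'_\epsilon$ converge to differences of point masses. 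Since $u$ is continuous, letting $\epsilon\to0$ gives the four corner values on the right-hand side of \eqref{eq:quadrature-1}. By continuity of $u$ on the closed rectangle, the identity then extends to subrectangles touching the boundary.

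\emph{Step 2: the Volterra representation.} Apply the identity with corners $0$ and $(x,y)$, in any of the four quadrants of the rectangle. Since $u(x,0)=u(0,y)=0$, this yields
\[
u(x,y)=\pm\int_0^x\int_0^y u(s,t)\,\diff t\,\diff s,
\]
with oriented integrals. Hence
\[
|u(x,y)|\le \Big|\int_0^{x}\int_0^{y}|u(s,t)|\,\diff t\,\diff s\Big|.
\]

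\emph{Step 3: Picard iteration.} Let $M=\max|u|$ on the compact rectangle. By induction on $n$,
\[
|u(x,y)|\le M\frac{|x|^n|y|^n}{(n!)^2}.
\]
Indeed, inserting the bound for $n$ into the integral inequality and integrating $|s|^n|t|^n$ produces the factor $|x|^{n+1}|y|^{n+1}/((n+1)!)^2$. Letting $n\to\infty$ gives $u=0$.

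The only delicate point is Step 1: justifying the passage from the distributional equation to the pointwise quadrature identity. The paper presents \eqref{eq:quadrature-1} as a known characterization, but we need it locally on the rectangle, with corners on the boundary. The mollified-indicator argument above, combined with uniform continuity of $u$ on the closed rectangle, handles this. Steps 2 and 3 are then routine.
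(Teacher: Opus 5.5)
Your proof is correct, but it closes the argument differently from the paper.

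\textbf{The paper's route.} The paper's proof of this theorem defers to the proof of Theorem \ref{thm:classical-2}. From the Volterra representation $u(x,y)=-\int_0^x\int_0^y u(s,t)\,\diff t\,\diff s$ it extracts only the first-order bound $|u(x,y)|\le|xy|\,\|u\|_{L^\infty(Q)}$. It then concludes $u=0$ on a rectangle $Q$ around the origin small enough that $\max\{|a|,a'\}\max\{|b|,b'\}<1$, since $\|u\|\le A\|u\|$ with $A<1$ forces $\|u\|=0$. Finally it propagates the vanishing by re-centering at translated base points $(\beta,0)$ and then $(0,\gamma)$. Each time it uses that $u$ already vanishes on the new characteristic cross, a continuation step the paper only sketches as ``analogous.''

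\textbf{Your route.} You iterate the integral inequality to get $|u(x,y)|\le M|x|^n|y|^n/(n!)^2$. This kills $u$ on the whole rectangle at once, with no smallness condition and no translation/continuation bookkeeping.

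\textbf{What Step 1 adds.} You also derive the quadrature identity \eqref{eq:quadrature-1} from the distributional equation, including for subrectangles touching the boundary; the paper takes this as a known characterization. That extension is genuinely needed here. The equation is only assumed in the interior, and the cross through the origin can lie on the boundary of $[a,a']\times[b,b']$, for instance when $a=0$.

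\textbf{Minor remark.} With oriented integrals the sign in your Step 2 is always minus, as in the paper. Your $\pm$ is harmless, since you only use absolute values afterwards.
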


The proof of this theorem is basically the same as that of 
Theorem \ref{thm:classical-2}.  It can be found in the appendix (Section 
\ref{sec:appendix-1}).

\subsection{Poincar\'e group invariance of the Klein-Gordon equation}
We formalize the invariance of the Klein-Gordon equation under the Poincar\'e
group in a proposition.

\begin{prop}
For a real parameter $\lambda\ne0$, and two real parameters $a,b$, let $L$
and $M$ denote the affine transformations
$L(x,y)=(\lambda x+a,\lambda^{-1}y+b)$ and 
$M(x,y)=(\lambda y+a,\lambda^{-1} x+b)$.
If $u$ solves the Klein-Gordon equation $u''_{xy}+u=0$ on a domain
$\Omega\subset\R^2$, then so do $v=u\circ L$ and $w=u\circ M$ on the
corresponding domains $L^{-1}(\Omega)$ and $M^{-1}(\Omega)$, respectively. 
\label{prop:Poincare-1}
\end{prop}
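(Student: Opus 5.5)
The plan is a chain-rule computation, done at the level of distributions since the paper works with continuous distributional solutions. I will treat $v=u\circ L$ first and then $w=u\circ M$.

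For smooth $u$ the computation for $L$ is immediate. Since
\[
v(x,y)=u(\lambda x+a,\lambda^{-1}y+b),
\]
the chain rule gives
\[
\partial_x\partial_y v(x,y)=\lambda\cdot\lambda^{-1}\,(\partial_x\partial_y u)(L(x,y))=(\partial_x\partial_y u)(L(x,y)).
\]
Hence $v''_{xy}+v=(u''_{xy}+u)\circ L=0$ on $L^{-1}(\Omega)$.

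For $M$, put $w(x,y)=u(\lambda y+a,\lambda^{-1}x+b)$. Then $\partial_x$ on $w$ corresponds to $\lambda^{-1}\partial_2$ on $u$, and $\partial_y$ corresponds to $\lambda\partial_1$. So
\[
\partial_x\partial_y w=(\partial_1\partial_2 u)\circ M .
\]
Here I use that mixed partials commute, which holds for distributions in general. Again $w''_{xy}+w=(u''_{xy}+u)\circ M=0$.

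To cover distributional solutions, I argue by duality. Let $\varphi\in C_c^\infty(L^{-1}(\Omega))$. Change variables $(s,t)=L(x,y)$; the Jacobian $|\det L|$ equals $1$, since the determinant is $\lambda\cdot\lambda^{-1}=1$. This gives
\[
\langle v,\varphi\rangle=\langle u,\varphi\circ L^{-1}\rangle .
\]
The chain-rule identity above, applied to the smooth function $\varphi\circ L^{-1}$, yields
\[
\partial_x\partial_y(\varphi\circ L^{-1})=(\partial_x\partial_y\varphi)\circ L^{-1}.
\]
Therefore
\[
\langle v''_{xy}+v,\varphi\rangle=\langle u,\psi''_{st}+\psi\rangle=0,\qquad \psi=\varphi\circ L^{-1}\in C_c^\infty(\Omega).
\]
The same argument works for $M$, where $|\det M|=1$ as well.

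An alternative for continuous $u$ is the quadrature identity \eqref{eq:quadrature-1}, which characterizes continuous distributional solutions. Axis-parallel rectangles map to axis-parallel rectangles under $L$ and under $M$, and the area element is preserved. Under $L$ the corner values pair up correctly; for $\lambda<0$ the orientation flips in both variables, so both sides change sign consistently. Under $M$ the roles of the corners $(a',b)$ and $(a,b')$ are swapped, which leaves the right-hand side of \eqref{eq:quadrature-1} invariant.

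There is no real obstacle here. The only points needing care are the sign bookkeeping when $\lambda<0$ and the fact that the factors $\lambda$ and $\lambda^{-1}$ cancel exactly; that cancellation is why only these particular matrices preserve the equation.
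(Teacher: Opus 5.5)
Your proposal is correct. The paper gives no proof and calls the result well known. Your argument is the standard verification: the chain rule, where the factors $\lambda$ and $\lambda^{-1}$ cancel, followed by duality using $|\det L|=|\det M|=1$ and the fact that $L^{-1}$ and $M^{-1}$ are maps of the same two types. It also justifies the remark after the proposition that the invariance holds for distributional solutions, and your alternative route through the quadrature identity is valid as well.
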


This result is well-known and we omit the proof. 

\begin{rem}
The assertion of the proposition holds not only for classical solutions
but for (weak) solutions in the sense of distribution theory as well.
\end{rem}

\subsection{Gluing solutions of the Klein-Gordon equation along 
characteristics}

Suppose we are given a bounded convex domain $\Omega\in\R^2$, and a 
line $L\subset \R^2$  which is such that $\Omega\cap L$ has positive length.
Suppose moreover $u:\Omega\to\C$ is continuous and harmonic in 
$\Omega\setminus L$. Then it is not necessarily true that $u$ extends 
to be harmonic in $\Omega$. What can prevent harmonic extension is the 
appearance of a \emph{Neumann jump} along $L$, that is, the first-order derivative
along the the normal direction has a jump across $L$.
As we replace the Laplacian (which has order $2$) with Cauchy-Riemann 
operator $\bar\partial$, which has order $1$, this problem does not occur, 
and we automatically get analytic continuation along any given linear segment 
$L\cap\Omega$ if the given function is 
continuous on $\Omega$ and holomorphic in $\Omega\setminus L$.  
As the Klein-Gordon equation $u''_{xy}+u=0$ has order $2$, the 
appearance of a nontrivial Neumann jump along the line $L$ may influence 
the extendability across the line as a solution. 
This, however, is not the case if the line is characteristic.

\begin{thm}
Suppose $\Omega\subset\R^2$ is bounded and convex and $L$ is a line
which is characteristic, that is, parallel to either the $x$-axis or the $y$-axis.
If $u:\Omega\to\C$ is continuous and solves the Klein-Gordon equation
$u''_{xy}+u=0$ in $\Omega\setminus L$ in the sense of distribution theory, 
then $u$ extends to a solution to the Klein-Gordon equation on all of 
$\Omega$.
\label{thm:glue-1}
\end{thm}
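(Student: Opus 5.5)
The plan is to use the quadrature identity \eqref{eq:quadrature-1} as a characterization of continuous distributional solutions, and to show that it survives across a characteristic line. By the symmetry $M$ of Proposition \ref{prop:Poincare-1} (the swap of $x$ and $y$) and a translation, we may assume that $L$ is the horizontal line $\{y=0\}$. We take as given the characterization stated before \eqref{eq:quadrature-1}: a continuous $u$ on an open set solves $u''_{xy}+u=0$ distributionally if and only if \eqref{eq:quadrature-1} holds for every closed rectangle $[a,a']\times[b,b']$ contained in the set. Since solving the equation distributionally is a local property, it is enough to check \eqref{eq:quadrature-1} for all small closed axis-parallel rectangles $R=[a,a']\times[b,b']\subset\Omega$. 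Convexity is not really needed beyond ensuring that such rectangles around points of $L\cap\Omega$ exist.

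Fix such a rectangle $R$. If $R$ does not meet $L$ in its interior, then $R$ lies in the closure of one of the half-domains. Applying the identity on slightly shrunk rectangles $[a,a']\times[b+\epsilon,b']$ and letting $\epsilon\to0$ gives \eqref{eq:quadrature-1} for $R$, by the continuity of $u$ up to $L$. If instead $b<0<b'$, we split $R$ into $R_-=[a,a']\times[b,0]$ and $R_+=[a,a']\times[0,b']$. Writing \eqref{eq:quadrature-1} for each piece and adding, the boundary terms $u(a',0)$ and $u(a,0)$ appear once with each sign and cancel. What remains is exactly \eqref{eq:quadrature-1} for $R$. This telescoping works precisely because the cut line is characteristic: the corner values add up to give the corners of $R$, and no normal-derivative (Neumann) term appears.

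The main point to verify carefully is the converse direction of the characterization: that \eqref{eq:quadrature-1} on all small rectangles implies $u''_{xy}+u=0$ in $\mathcal{D}'$. To prove it, fix $\varphi\in C_c^\infty$ and consider $\int u\,\varphi''_{xy}$. Approximate $\varphi''_{xy}$ by mixed second differences $\Delta_h\Delta_k\varphi/(hk)$. Summing by parts against $u$ turns the pairing into a sum of rectangle expressions of the form $u(a',b)+u(a,b')-u(a',b')-u(a,b)$ over a grid of mesh $h\times k$. Replacing each of these by the corresponding integral of $u$ over the grid cell via \eqref{eq:quadrature-1}, and letting $h,k\to0$, yields $\int u\,\varphi''_{xy}=-\int u\varphi$. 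Convergence follows from uniform continuity of $u$ on $\operatorname{supp}\varphi$. The forward direction, used above on the two half-domains, is obtained by mollifying $u$ and integrating $\partial_x\partial_y u_\epsilon=-u_\epsilon$ over a rectangle.
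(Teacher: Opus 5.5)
Your proposal is correct and follows essentially the same route as the paper: reduce to a horizontal characteristic line, use continuity to get the quadrature identity \eqref{eq:quadrature-1} for rectangles whose boundary touches $L$, and split a rectangle crossing $L$ along $L$ so that the corner values on $L$ cancel when the two identities are added. The only difference is that you also sketch both directions of the quadrature-identity characterization of continuous distributional solutions (mollification one way, mixed difference quotients the other), which the paper takes as known.
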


The proof of this (likely known) theorem is supplied in the appendix (Section 
\ref{sec:appendix-1}).

\subsection{Splitting Riemann's solution into unilateral wave solutions}
Riemann's solution $\Rop[f,g]$ to the Darboux-Goursat problem
\eqref{eq:KG+Goursat-1} may be split into three components.

\begin{prop}
If $u=\Rop[f,g]$ is Riemann's solution to the Darboux-Goursat problem
\eqref{eq:KG+Goursat-1} with continuous data $f,g$, subject to the
compatibility condition $f(0)=g(0)$, then $u$ splits as
\[
u=u_{0}+u_{\mathrm{horz}}+u_{\mathrm{vert}},
\]
where
\[
u_0(x,y)=f(0)J_{0,0}(x,y)=f(0)\Rop[1,1](x,y)
\]
and 
\[
u_{\mathrm{horz}}=\Rop[f_0,0],\quad u_{\mathrm{vert}}=\Rop[0,g_0],
\]
with $f_0(t)=f(t)-f(0)$ and $g_0(t)=g(t)-g(0)$. Here, each component $u_0$,
$u_{\mathrm{horz}}$, $u_{\mathrm{vert}}$ is continuous and solves the Klein-Gordon
equation $u''_{xy}+u=0$ individually. 
\label{prop:split-1}
\end{prop}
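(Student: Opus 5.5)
The splitting rests on linearity of the map $(f,g)\mapsto\Rop[f,g]$ together with the explicit form of $\Rop[c,c]$ for a constant $c$. I would proceed as follows.

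First, I note from the definition that $\Rop[f,g]$ is linear in the pair $(f,g)$. Every term in the defining expression,
\[
f(x),\quad g(y),\quad f(0)J_{0,0}(x,y),\quad \int_0^x J_{0,1}(x-s,y)f(s)\,\diff s,\quad \int_0^y J_{1,0}(x,y-t)g(t)\,\diff t,
\]
is linear in $f$ or in $g$. Writing $f=f_0+f(0)$ and $g=g_0+g(0)$, with $f(0)=g(0)$, gives
\[
\Rop[f,g]=\Rop[f_0,0]+\Rop[0,g_0]+f(0)\Rop[1,1].
\]
The pairs $(f_0,0)$, $(0,g_0)$ and $(1,1)$ each satisfy the compatibility condition, since $f_0(0)=0=g_0(0)$. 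So each summand is a legitimate Riemann solution, and Theorem~\ref{thm:classical-1} shows that each is continuous and solves the Klein-Gordon equation distributionally. This takes care of $u_{\mathrm{horz}}$ and $u_{\mathrm{vert}}$.

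It remains to identify $\Rop[1,1]=J_{0,0}$. Here I would appeal to uniqueness. The function $J_{0,0}$ is entire, solves the Klein-Gordon equation, and satisfies $J_{0,0}(x,0)=J_{0,0}(0,y)=1$. Theorem~\ref{thm:classical-2} then forces $\Rop[1,1]=J_{0,0}$.

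A direct check is also possible. Substituting into the definition gives
\[
\Rop[1,1](x,y)=2-J_{0,0}(x,y)-\int_0^x J_{0,1}(x-s,y)\,\diff s-\int_0^y J_{1,0}(x,y-t)\,\diff t.
\]
Using the relation $\partial_x J_{0,0}=-J_{0,1}$, the first integral equals $J_{0,0}(x,y)-J_{0,0}(0,y)=J_{0,0}(x,y)-1$. By the symmetric relation $\partial_y J_{0,0}=-J_{1,0}$, the second integral also equals $J_{0,0}(x,y)-1$. Hence
\[
\Rop[1,1]=2-J_{0,0}-2(J_{0,0}-1)+\ldots
\]
needs care with the bookkeeping, which is one reason I prefer the uniqueness route via Theorem~\ref{thm:classical-2}.

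There is no real obstacle. The only point needing attention is the identification $\Rop[1,1]=J_{0,0}$, and invoking uniqueness handles it cleanly.
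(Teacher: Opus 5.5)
Your argument is correct and is essentially the paper's: linearity of $(f,g)\mapsto\Rop[f,g]$ splits the data as $(f_0,0)+(0,g_0)+f(0)(1,1)$, Theorem~\ref{thm:classical-1} handles each summand, and the uniqueness Theorem~\ref{thm:classical-2} identifies $\Rop[1,1]=J_{0,0}$. Your abandoned direct check failed only because of a sign slip: since $J_{0,1}=-\partial_x J_{0,0}$ and $J_{1,0}=-\partial_y J_{0,0}$, both integrals equal $1-J_{0,0}(x,y)$ rather than $J_{0,0}(x,y)-1$, so $\Rop[1,1]=2-J_{0,0}-2(1-J_{0,0})=J_{0,0}$ follows at once.
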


This assertion is immediate from Theorems \ref{thm:classical-1} and 
\ref{thm:classical-2} 
by slightly rearranging the terms in Riemann's solution. 
Alternatively, we can observe that the expressed solution $u$ solves the Klein-Gordon
equation $u''_{xy}+u=0$ in the sense of distribution theory since each of the terms 
$u_0,u_{\mathrm{horz}},u_{\mathrm{vert}}$ does, and that the values along the the 
$x$-axis and the $y$-axis coincide with $f$ and $g$, respectively. Theorem 
\ref{thm:classical-2} then shows that $u=u_0+u_{\mathrm{horz}}+u_{\mathrm{vert}}$
must coincide with $\Rop[f,g]$. 
Proposition \ref{prop:split-1} motivates the following terminology.

\begin{defn}
The wave $u_{\mathrm{horz}}=\Rop[f_0,0]$ is called a \emph{horizontal unilateral
wave solution}, whereas the wave $u_{\mathrm{vert}}=\Rop[0,g_0]$ is called a
\emph{vertical unilateral wave solution}. Together, we refer to the waves of
the form $u_{\mathrm{horz}}$ or $u_{\mathrm{vert}}$ as
\emph{unilateral wave solutions}.
\end{defn}

The following proposition characterizes such unilateral wave solutions. 

\begin{prop}
Suppose $u:\R^2\to\C$ is continuous and solves the Klein-Gordon equation
$u''_{xy}+u=0$ in the sense of distribution theory. Then

\noindent{\rm(a)} $u=u_{\mathrm{horz}}$ holds if and only if $u(0,y)=0$
for all $y\in\R$, and

\noindent{\rm(b)} $u=u_{\mathrm{vert}}$ holds if and only if $u(x,0)=0$
for all $x\in\R$.
\label{prop:unilat-1}
\end{prop}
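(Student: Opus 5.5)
The plan is to reduce both parts to the uniqueness statement in Theorem \ref{thm:classical-2}, together with the splitting in Proposition \ref{prop:split-1}. By the symmetry $(x,y)\mapsto(y,x)$, which belongs to the Lorentz group and preserves the Klein-Gordon equation by Proposition \ref{prop:Poincare-1}, part (b) follows from part (a). The only extra check is that this reflection interchanges $\Rop[f_0,0]$ and $\Rop[0,g_0]$. This is visible from the defining formula for $\Rop$ together with $J_{a,0}(x,y)=J_{0,a}(y,x)$. So I concentrate on (a).

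First, I need to fix what $u_{\mathrm{horz}}$ means for a given $u$. Put $f(x)=u(x,0)$ and $g(y)=u(0,y)$. These are continuous, and $f(0)=g(0)=u(0,0)$. By Theorem \ref{thm:classical-2}, $u=\Rop[f,g]$. Proposition \ref{prop:split-1} then gives $u=u_0+u_{\mathrm{horz}}+u_{\mathrm{vert}}$ with
\[
u_0=f(0)J_{0,0},\qquad u_{\mathrm{horz}}=\Rop[f_0,0],\qquad u_{\mathrm{vert}}=\Rop[0,g_0].
\]
The statement ``$u=u_{\mathrm{horz}}$'' I read as $u_0+u_{\mathrm{vert}}=0$, or equivalently as $u=\Rop[f_0,0]$ for some continuous $f_0$ with $f_0(0)=0$.

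For the direction ($\Leftarrow$), assume $u(0,y)=0$ for all $y$. Then $g\equiv0$, so $f(0)=g(0)=0$, and therefore $f_0=f$, $g_0=0$ and $u_0=0$. Hence $u_{\mathrm{vert}}=\Rop[0,0]=0$, which is immediate from the formula. It follows that $u=\Rop[f,0]=\Rop[f_0,0]=u_{\mathrm{horz}}$.

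For the direction ($\Rightarrow$), suppose $u=\Rop[f_0,0]$ with $f_0(0)=0$. By the boundary identity \eqref{eq:Goursat-2} applied with data $(f_0,0)$, we get $u(0,y)=0$ for every $y$.

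The only delicate point is the interpretation of ``$u=u_{\mathrm{horz}}$'' when it is read with respect to $u$'s own decomposition. In that reading, $u=u_{\mathrm{horz}}$ means $u_0+u_{\mathrm{vert}}=0$. Restricting this to the $y$-axis gives $f(0)+g_0(y)=0$, that is, $g(y)=0$, using again \eqref{eq:Goursat-2} and $J_{0,0}(0,y)=1$. So both readings give the same conclusion. I expect no real obstacle: the argument is bookkeeping once Theorem \ref{thm:classical-2} identifies $u$ with Riemann's solution.
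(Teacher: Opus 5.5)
Your proposal is correct and matches the paper's route. The paper gives no argument beyond noting that, once Theorems \ref{thm:classical-1} and \ref{thm:classical-2} identify $u$ with $\Rop[u(\cdot,0),u(0,\cdot)]$, the proposition is immediate from the definitions of $u_{\mathrm{horz}}$ and $u_{\mathrm{vert}}$; your write-up carries out that bookkeeping explicitly, including the identity $\Rop[f,g](x,y)=\Rop[g,f](y,x)$ behind your reduction of (b) to (a).
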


In view of the properties of Riemann's solution $u=\Rop[f,g]$ outlined in
Theorems \ref{thm:classical-1} and \ref{thm:classical-2}, the assertion of
the proposition is immediate from the definition of unilateral wave solutions, 
horizontal and vertical. 

It is the aim of this paper to study the growth properties of the unilateral
wave solutions $u_{\mathrm{horz}}$ and $u_{\mathrm{vert}}$.
By Lorentz group invariance, it is equivalent to focus on just one of them,
and we will consider the horizontal unilateral wave solution $u_{\mathrm{horz}}$.
As it turns out, there is a Liouville phenomenon for such solutions.
Here, we recall the classical theorem of Liouville, which asserts that
any entire function which is also bounded has to be constant.
There are many extensions, for instance, a harmonic function in the entire
plane which is bounded must be constant as well. A version which is
particularly attractive is known as the \emph{Phragm\'en-Lindel\"of principle}
which asserts that if a harmonic function vanishes on a line and has
insufficient growth on one side of the line, then it must vanish everywhere.

\subsection{Motivation: the Phragm\'en-Lindel\"of principle for
harmonic functions}

The analogous instance of the Laplace equation in place of the Klein-Gordon
equation is well-known, and called the \emph{Phragm\'en-Lindel\"of principle}.

\begin{thm}
{\rm (Phragm\'en-Lindel\"of for harmonic functions)}
Suppose $u:\mathfrak{H}\to\C$ is harmonic, where $\mathfrak{H}$ is the
half-plane $\mathfrak{H}=\R\times\R_{>0}$.
Suppose, moreover, that $u$ extends continuously to
the closed half-plane $\overline{\mathfrak{H}}=\R\times\R_{\ge0}$, and that
$u(x,0)=0$ holds for all $x\in\R$. Suppose, moreover, that there exists
a sequence of bounded domains $D_n$ in $\mathfrak{H}$, for $n=1,2,3,\ldots$,
which increase monotonically, $D_n\subset D_{n+1}$, and exhaust $\mathfrak{H}$
(i.e.,$\cup_n D_n=\mathfrak{H}$ holds) while
\[
u(x,y)=\ordo(y),\qquad (x,y)\in\bigcup_{n=1}\partial D_n,
\]
holds as $|(x,y)|\to+\infty$. Then $u(x,y)\equiv0$ identically.
\label{thm:PhL}
\end{thm}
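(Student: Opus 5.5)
We will prove Theorem \ref{thm:PhL} by Schwarz reflection followed by a Poisson-type representation on the domains $D_n$, where the growth condition $u=\ordo(y)$ on $\bigcup_n\partial D_n$ is used to make the boundary contributions vanish in the limit.

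\emph{Reductions.} Since $u$ is complex-valued, we may treat $\Re u$ and $\mathrm{Im}\,u$ separately, so we assume $u$ is real. By the Schwarz reflection principle (valid since $u$ is continuous up to $\R\times\{0\}$ and vanishes there), $u$ extends to a harmonic function on $\R^2$ via $u(x,-y)=-u(x,y)$. In particular $u$ is real-analytic across the real axis.

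\emph{Key step: a maximum-principle comparison on $D_n$.} Fix a point $(x_0,y_0)\in\mathfrak{H}$ and $\epsilon>0$. Compare $u$ with the harmonic function $\epsilon y$ on $D_n$. By hypothesis there is $R$ such that $|u(x,y)|\le\epsilon y$ whenever $(x,y)\in\bigcup_n\partial D_n$ and $|(x,y)|\ge R$. The obstacle is the part of $\partial D_n$ inside the ball of radius $R$, and also any part of $\partial D_n$ touching the real axis, where $y$ is small and the bound $\epsilon y$ carries no information. To handle this, we would not use $\partial D_n$ alone. Instead we work on $\Omega_n=D_n\setminus \overline{B(0,R')}$, or more cleanly we replace the comparison function by $\epsilon y + C\,h$, where $h$ is harmonic in $\mathfrak H$, positive, vanishes on $\R$, and decays at infinity. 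A natural choice is the Poisson kernel $h(x,y)=y/(x^2+(y+1)^2)$, suitably scaled.

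Concretely, on the compact set $K=\overline{\mathfrak H}\cap\overline{B(0,R)}$ we have $|u|\le C_0 y$. This holds because $u$ vanishes on the axis and is $C^1$ up to it by the reflection step, so $|u(x,y)|\le \sup_K|\partial_y u|\cdot y$. Hence on $K$ we get $|u|\le C_0y\le C_1\,y/(x^2+(y+1)^2)$. Then on $\partial D_n$ (for $n$ large, so that $D_n$ contains $(x_0,y_0)$) we have $|u|\le \epsilon y + C_1 h$ everywhere: at points near the origin from the compact estimate, and far out from the hypothesis. If $\partial D_n$ meets the real axis, $u=0$ there, which is fine. The maximum principle on the bounded domain $D_n$ then gives $|u(x_0,y_0)|\le \epsilon y_0+C_1h(x_0,y_0)$.

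The remaining difficulty is that $C_1$ depends on $R$, which depends on $\epsilon$, so letting $\epsilon\to0$ is not yet enough. I expect this to be the main obstacle. The fix I would try first is to apply the argument with a rescaled test function. The estimate we actually obtain shows that $u$ is bounded by $\epsilon y+$(something bounded), uniformly on $\mathfrak H$ away from the boundary region. So $u$ is $\Ordo(1+y)$ on $\mathfrak H$, hence the odd extension satisfies $|u|\le C(1+|z|)$ on $\R^2$. By Liouville's theorem for harmonic functions of linear growth, $u$ is then a linear polynomial. Being odd in $y$ and vanishing on the axis, it equals $u=cy$. Finally, $cy=\ordo(y)$ along points of $\bigcup_n\partial D_n$ tending to infinity, and such points must exist with $y$ bounded below, since the $D_n$ exhaust $\mathfrak H$ and are bounded. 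This forces $c=0$, so $u\equiv0$.
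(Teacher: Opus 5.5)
Your proof is correct and is essentially the argument the paper has in mind, though the paper only sketches it: the maximum principle applied to $\Re u$ and $\mathrm{Im}\,u$ on the bounded domains $D_n$, combined with Schwarz reflection and Liouville's theorem for entire harmonic functions of linear growth. One correction, which does not affect the logic: $h(x,y)=y/(x^2+(y+1)^2)$ is neither a Poisson kernel nor harmonic (indeed, no positive harmonic function on $\mathfrak{H}$ with zero boundary values decays at infinity), but $\Delta h=-4/(x^2+(y+1)^2)^2<0$, so $\epsilon y+C_1h$ is a superharmonic majorant and the comparison on $D_n$ still goes through; and since you only use that $h$ is bounded, you could simply replace $C_1h$ by the constant $\sup_K|u|$.
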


\begin{rem}
The proof of the the Phragm\'en-Lindel\"of principle is based on the maximum
principle, which applies to the real and imaginary parts of $u$ separately.
The example $u(x,y)=y$ shows that the growth condition on the harmonic function
$u$ optimal. As for the Klein-Gordon equation, we cannot access any maximum
principle directly, given that the equation is hyperbolic.
\end{rem}

The connection with Liouville's theorem is actually rather direct: By Schwarz
reflection, which applies along the linear boundary since the values of the
harmonic function vanish there, the harmonic function $u$ extends to a
harmonic function in the whole complex plane with $u(\bar z)=-u(z)$, where
we identify $z=x+\imag y\sim(x,y)$.
The growth assumptions on this entire harmonic function are such that we
have very slow growth. In other words, we are close in spirit to the
classical Liouville theorem. 

\begin{rem}
There is a rather sizable literature on Liouville type phenomena for elliptic
equations, linear and nonlinear, also involving fractional Laplacians, etc. 
We could mention a particularly interesting effect in the discrete context, where
harmonicity combined with boundedness on a big enough portion of the lattice 
$\Z^2$ is enough to force the function to be constant \cite{BLMS}. This is in 
contrast with the continuous setting where it is enough to have a narrow path 
tending to infinity to permit the existence of a nontrivial harmonic function which 
grows wildly along the path while it is bounded elsewhere.
\end{rem}

\subsection{Liouville type phenomenon for spacelike quarter-planes}

Apparently, in the context of the Phragm\'en-Lindel\"of principle, there
is some degree of minimal growth which must be surpassed to get a nontrivial
solution.
What about the unilateral solutions to the Klein-Gordon equation?
By the characterization of Proposition \ref{prop:unilat-1}, they vanish
either on the horizontal line $\R\times\{0\}$ or the vertical line
$\{0\}\times\R$. This may be able to force them to grow somewhere, and, in fact,
as it turns out, the required growth is rather strong. A first result of this
kind which we may present is the following. Note that with respect to the
origin $(0,0)$, the equation $xy=0$ represents the light cone emanating from
$(0,0)$, while $xy>0$ represents the timelike vectors, and $xy<0$ represents
the spacelike vectors. The spacelike vectors can all be thought of as
representing events contemporary with that of the origin, and hence without 
causal relationship to the origin.

\begin{thm}
Suppose $u$ is a continuous complex-valued function on the closed
spacelike quarter-plane $\R_{\ge0}\times\R_{\le0}$, which solves the Klein-Gordon
equation $u''_{xy}+u=0$ in the sense of distribution theory in the interior
quarter-plane $\R_{>0}\times\R_{<0}$. 
Suppose moreover that $u$ vanishes along the semi-axis $\{0\}\times \R_{\le0}$.
Finally, we assume that $u$ meets the uniform growth bound
\[
|u(x,y)|=\Ordo\big(\exp(\theta_1 x+\theta_2|y|)\big),\qquad
(x,y)\in\R_{\ge0}\times\mathcal{Y},  
\]
where $\theta_1,\theta_2\in\R_{>0}$ and the nonempty subset
$\mathcal{Y}\subset\R_{\le0}$ has $0\in\mathcal{Y}$ and
$\inf\mathcal{Y}=-\infty$.
Then, if $\theta_1\theta_2<1$, 
it follows that the solution collapses to $u(x,y)=0$ for all
$(x,y)\in\R_{\ge0}\times \R_{\le0}$. 
\label{thm:main-1-0}
\end{thm}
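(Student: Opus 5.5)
The plan is to reduce everything to the boundary trace $f(x):=u(x,0)$, $x\ge0$, and apply a Laplace transform in $x$. The Riemann representation turns into multiplication by an explicit exponential factor in the Laplace variable, and the growth hypothesis forces that factor to kill $f$.

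\emph{Step 1 (representation).} Put $f(x)=u(x,0)$. Since $u(0,\cdot)=0$, we have $f(0)=0$. For fixed $x>0$ and $y<0$, consider the rectangle $[0,x]\times[y,0]$. Riemann's solution $\Rop[f,0]$ is a global continuous solution, after extending $f$ continuously to all of $\R$, say by $0$ on $\R_{<0}$. It agrees with $u$ on the two characteristic edges through the origin. By Theorem \ref{thm:Picard-2}, applied to the difference, $u=\Rop[f,0]$ on the closed quarter-plane, so
\[
u(x,y)=f(x)-\int_0^x J_{0,1}(x-s,y)f(s)\,\diff s .
\]
Writing $y=-\eta$ with $\eta>0$, the series gives $-J_{0,1}(x,-\eta)=K_\eta(x):=\sum_{j\ge0}\eta^{j+1}x^j/((j+1)!\,j!)$, a positive entire kernel. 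Hence
\[
u(x,-\eta)=f(x)+\int_0^x K_\eta(x-s)f(s)\,\diff s .
\]

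\emph{Step 2 (Laplace transform).} Since $0\in\mathcal{Y}$, we have $|f(x)|=\Ordo(e^{\theta_1 x})$. So $F(p)=\int_0^\infty e^{-px}f(x)\,\diff x$ is holomorphic in $\Re p>\theta_1$. Termwise, $\int_0^\infty e^{-px}K_\eta(x)\,\diff x=\sum_{j\ge0}\eta^{j+1}/((j+1)!\,p^{j+1})=e^{\eta/p}-1$. The kernel has growth $\Ordo(e^{\epsilon x})$ for every $\epsilon>0$, so this is justified for $p>0$. By the convolution theorem, for real $p>\theta_1$,
\[
\int_0^\infty e^{-px}u(x,-\eta)\,\diff x=e^{\eta/p}F(p).
\]

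\emph{Step 3 (Liouville step).} For $-\eta\in\mathcal{Y}$, the uniform growth bound gives $\big|e^{\eta/p}F(p)\big|\le C e^{\theta_2\eta}/(p-\theta_1)$, with $C$ independent of $\eta$. Since $\theta_1\theta_2<1$, we can choose $p$ with $\theta_1<p<1/\theta_2$, so that $1/p>\theta_2$. Letting $\eta\to+\infty$ through $-\mathcal{Y}$, which is possible since $\inf\mathcal{Y}=-\infty$, yields $F(p)=0$ for all such $p$. This holds on an interval, so by analyticity $F\equiv0$ on $\Re p>\theta_1$. Uniqueness of the Laplace transform for continuous functions then gives $f\equiv0$ on $\R_{\ge0}$. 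Finally, $u$ vanishes on both edges, so $u\equiv0$ by Theorem \ref{thm:Picard-2}.

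\emph{Main obstacle.} The only delicate point is Step 3. It needs the constant in the growth bound to be uniform in $\eta\in-\mathcal{Y}$, which is what ``uniform growth bound'' provides. It also needs the interchange of integrals in the convolution identity, which is justified by Fubini because the kernel $K_\eta$ is positive and $f$ has exponential bound $e^{\theta_1 x}$. The rest is bookkeeping with the Bessel series.
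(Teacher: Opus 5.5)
Your proposal is correct and follows the paper's proof essentially step by step. Both arguments reduce to $u=\Rop[f,0]$ via Theorem \ref{thm:Picard-2}, use the Laplace-transform evolution identity $\Lop[u(\cdot,y)](\zeta)=\e^{-y/\zeta}\Lop[f](\zeta)$ together with the uniform bound, and let $|y|\to\infty$ through $\mathcal{Y}$ to force $\Lop[f]=0$ where $\Re\zeta>\theta_1$ and $\Re(1/\zeta)>\theta_2$. The only difference is cosmetic: you restrict to the real segment $\theta_1<p<1/\theta_2$ rather than the full complex region before invoking the identity theorem, which works equally well.
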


The proof of Theorem \ref{thm:main-1-0} is supplied in Subsection
\ref{ss:q=1}. A reasonable interpretation of the theorem is that for solutions 
relevant to Physics, the values along the negative $y$-axis determine the 
values along the positive $x$-axis, and vice versa. 

\begin{rem}
(a) If we compare with Theorem \ref{thm:PhL}, the union of lines
$\R_{\ge0}\times\mathcal{Y}$ corresponds to the union of fronts
$\cup_n\partial D_n$.

\noindent{(b)} The fact that the criterion involves the product $\theta_1\theta_2$
is a consequence of the Lorentz group invariance of the Klein-Gordon equation
(see Proposition \ref{prop:Poincare-1}). 

\noindent{(c)} In the instance that $\calY=\R_{\le0}$, it appears that 
Theorem \ref{thm:main-1-0} was first obtained by Andrew Bakan (unpublished). 
After all, he mentioned a version of it in a seminar presentation in Stockholm 
during his visit in the fall semester of 2016. We have since learned that Bakan's 
interest in the Klein-Gordon equation $u''_{xy}+u=0$ has its roots in the 1990s.
However, the paper by Peter Ullrich \cite{Ullrich} also gives unique extension of
solutions to the Klein-Gordon equation from a characteristic hypersurface in a
higher-dimensional context, where  the growth control is not pushed to the limit
(basically polynomial growth control, as this corresponds to the growth of 
tempered distributions). We can also refer to Heinzl and Werner for some related
physical considerations \cite{Heinzl-Werner}. 
\end{rem}

The condition that $u$ vanishes along the semi-axis $\{0\}\times\R_{\le0}$
entails that $u$ may be extended to a solution to the Klein-Gordon equation
in the whole plane $\R^2$ which vanishes on the line $\{0\}\times\R$, that is,
the extension is a horizontal unilateral solution. 

It is natural to wonder about the sharpness of the conditions in Theorem
\ref{thm:main-1-0}. To this end, we have the following.

\begin{thm}
Suppose $\theta_1,\theta_2\in\R_{>0}$ with $\theta_1\theta_2\ge1$.
There exists a nontrivial continuous horizontal unilateral solution 
$u=u_{\mathrm{horz}}$ to the Klein-Gordon equation $u''_{xy}+u=0$ such 
that the restriction of $u$ to the quarter-plane $\R_{\ge0}\times\R_{\le0}$ 
does not vanish identically, while it meets the uniform growth condition
\[
|u(x,y)|=\Ordo\big(\exp(\theta_1 x+\theta_2|y|)\big),\qquad
(x,y)\in\R_{\ge0}\times\R_{\le0}.  
\]
\label{thm:main-1-1}
\end{thm}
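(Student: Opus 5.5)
The plan is to exhibit an explicit entire solution, namely the bivariate Bessel function $u=J_{1,0}$ from the introduction, and to verify the required properties by a direct estimate on its power series. No limiting or superposition argument should be needed.

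First, the structural properties. We have already observed that $J_{1,0}$ is an entire function of two variables satisfying $\partial_x\partial_yJ_{1,0}+J_{1,0}=0$, so it is a continuous (indeed smooth) solution on all of $\R^2$. Moreover $J_{1,0}(0,y)=0$ for all $y\in\R$, since every term of the series contains the factor $x^{j+1}$. By Proposition \ref{prop:unilat-1}(a), $u=J_{1,0}$ is therefore a horizontal unilateral wave solution. Nontriviality on the quarter-plane is immediate from $J_{1,0}(x,0)=x$, which is nonzero for $x>0$.

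Second, the growth bound, which is the only substantive point. For $x\ge0$ and $y\le0$, put $y=-|y|$. Then
\[
J_{1,0}(x,y)=\sum_{j=0}^{+\infty}\frac{x^{j+1}|y|^j}{(j+1)!\,j!}\ge0,
\]
which is the modified Bessel regime: the function behaves like $I_1(2\sqrt{x|y|})$ and grows like $\exp(2\sqrt{x|y|})$. The heuristic reason the threshold is $\theta_1\theta_2\ge1$ is the AM--GM inequality: $2\sqrt{x|y|}\le\theta_1x+|y|/\theta_1\le\theta_1x+\theta_2|y|$. To obtain a clean $\Ordo$ bound without worrying about the prefactor $(x/|y|)^{1/2}$ near the edges, I would rescale. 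Set $a=\theta_1x$ and $b=|y|/\theta_1$, so that $x^{j+1}|y|^j=\theta_1^{-1}a^{j+1}b^j$. Each term of the series is then one of the nonnegative terms of the product series $\sum_k a^k/k!\cdot\sum_j b^j/j!$. Hence
\[
0\le J_{1,0}(x,y)\le\theta_1^{-1}\e^{a+b}=\theta_1^{-1}\exp\big(\theta_1x+|y|/\theta_1\big)\le\theta_1^{-1}\exp\big(\theta_1x+\theta_2|y|\big),
\]
where the last step uses $1/\theta_1\le\theta_2$, which is exactly the hypothesis $\theta_1\theta_2\ge1$.

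The main obstacle I anticipated was controlling the algebraic prefactors in the Bessel asymptotics, $I_1(z)\sim\e^z/\sqrt{2\pi z}$ together with $(x/|y|)^{1/2}$, uniformly up to the edges $x=0$ and $y=0$ of the quarter-plane. The termwise comparison with $\e^{a+b}$ sidesteps this entirely and gives the estimate with the explicit constant $\theta_1^{-1}$, uniformly on the whole closed quarter-plane. If one preferred an example built from exponential plane waves $\e^{\alpha x-y/\alpha}$, the difficulty would instead be enforcing vanishing on $\{0\}\times\R$. The Bessel choice builds that condition in automatically.
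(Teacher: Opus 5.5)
Your proof is correct. It differs from the paper's in two respects: the choice of example and the way the growth bound is obtained.

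The paper takes $u_1=\Rop[f_1,0]$ with bounded ramp data $f_1$ (equal to $x$ on $[0,1]$ and to $1$ beyond). It shows termwise that $0\le u_1(x,-y)\le J_{0,0}(x,-y)$ for $x,y\ge0$. It then appeals to the classical asymptotics $J_{0,0}(x,-y)\sim\frac{1}{2\sqrt{\pi}}(xy)^{-1/4}\exp(2\sqrt{xy})$, plus boundedness when $xy$ stays bounded. Only after that does it apply $2\sqrt{xy}\le\theta_1x+\theta_1^{-1}y\le\theta_1x+\theta_2y$.

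You instead use the closed-form solution $J_{1,0}$, which is exactly $\Rop[f,0]$ with $f(x)=x$; a Beta integral in Riemann's formula confirms this directly, independently of the characterization of unilateral waves. Your embedding of the series into the product $\e^{a}\e^{b}$ merges the AM--GM step and the asymptotic step into a single elementary inequality. It gives the explicit constant $\theta_1^{-1}$, uniformly up to the edges of the quarter-plane, with no Bessel asymptotics.

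The same device would also shorten the paper's own argument, since termwise $J_{0,0}(x,-y)=\sum_j (xy)^j/(j!)^2\le\exp(\theta_1x+y/\theta_1)$. What the paper's choice offers in return is bounded data along the positive $x$-axis, so whatever growth occurs in the quadrant is not inherited from the data. For the theorem as stated this makes no difference.
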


The proof of Theorem \ref{thm:main-1-1} is supplied in Subsection 
\ref{ss:growth-1}.

It is a natural question to ask if we can play with the growth conditions 
while keeping the conclusion of Theorem \ref{thm:main-1-0}. For instance, if
the growth in the $x$ direction is slower, can we allow for faster growth
in the $y$ direction? It turns out that this question has answers which
connect with the foundations of harmonic analysis, and more precisely,
with the issue of so-called analytic non-quasianalyticity, as studied by
Arne Beurling \cite{Beu}.

\begin{thm}
Suppose $u$ is a continuous complex-valued function on the closed
spacelike quarter-plane $\R_{\ge0}\times\R_{\le0}$, which solves the Klein-Gordon
equation $u''_{xy}+u=0$ in the sense of distribution theory in the interior
quarter-plane $\R_{>0}\times\R_{<0}$.
Suppose moreover that $u$ vanishes along the semi-axis $\{0\}\times \R_{\le0}$.
Finally, we assume that the nonempty subset $\mathcal{Y}\subset\R_{\le0}$ has
$\inf\mathcal{Y}=-\infty$, and 
that $u$ enjoys the growth bound
\[
\forall y\in\mathcal{Y}:\,\,\,
|u(x,y)|=\Ordo_y\big(\exp(\theta x^{q})\big),
\]
as $x\to+\infty$, where the implied constant is allowed to depend on $y$, and
the parameters $q,\theta$ are confined to $0<q<\frac12$ and $0<\theta<+\infty$.
It then follows that the solution collapses to $u=0$ on
$\R_{\ge0}\times\R_{\le0}$. 
\label{thm:main-2}
\end{thm}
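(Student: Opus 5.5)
The plan is to reduce to a one-variable problem via Riemann's formula and then run a Laplace-transform argument in $x$. The key observation is that translation in the $y$-direction acts on Laplace transforms as multiplication by an exponential in $1/p$.

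\emph{Reduction.} Since $u$ vanishes on $\{0\}\times\R_{\le0}$, Theorems \ref{thm:glue-1} and \ref{thm:Picard-2} (applied on rectangles $[0,a']\times[b,0]$) give $u=\Rop[f,0]$ on the quarter-plane, where $f(x)=u(x,0)$ and $f(0)=0$. Writing $y=-\eta$ with $\eta\ge0$, the series for $J_{0,1}$ gives
\[
u(x,-\eta)=f(x)+\int_0^x K_\eta(x-s)f(s)\,\diff s,\qquad
K_\eta(t)=\sum_{j\ge0}\frac{t^j\eta^{j+1}}{j!(j+1)!}.
\]
Termwise, the Laplace transform of $K_\eta$ is $\sum_{j\ge0}\eta^{j+1}p^{-j-1}/(j+1)!=\e^{\eta/p}-1$. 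So formally $\widehat{u(\cdot,-\eta)}(p)=\e^{\eta/p}\hat f(p)$.

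\emph{Avoiding any growth assumption on $f$.} No growth is assumed on $f$ itself, so I compare two levels instead. Fix $\eta_1\in-\mathcal{Y}$. For each $\eta\in-\mathcal{Y}$ with $\eta>\eta_1$, set $\delta=\eta-\eta_1$. Applying the above with $u(\cdot,-\eta_1)$ as new boundary data (Poincar\'e translation, Proposition \ref{prop:Poincare-1}, together with uniqueness, Theorem \ref{thm:Picard-2}) gives the identity
\[
u(\cdot,-\eta)=u(\cdot,-\eta_1)+K_\delta*u(\cdot,-\eta_1).
\]
Both sides have growth $\Ordo(\exp(\theta x^q))$ by hypothesis, so their Laplace transforms $G_{\eta}$, $G_{\eta_1}$ are analytic in $\Re p>0$ and satisfy $G_\eta=\e^{\delta/p}G_{\eta_1}$ there.

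\emph{Estimates in the variable $w=1/p$.} Let $h(w)=G_{\eta_1}(1/w)$ and $h_\delta(w)=G_\eta(1/w)$, both analytic in $\Re w>0$. Then
\[
h(w)=\e^{-\delta w}h_\delta(w).
\]
The elementary estimate $\int_0^\infty \e^{-\sigma x+\theta x^q}\diff x\le C\exp(C'\sigma^{-\beta})$, with $\beta=q/(1-q)$, together with $\Re(1/w)=\Re w/|w|^2$, gives
\[
|h_\delta(w)|\le C_\delta\exp\bigl(C'(|w|^2/\Re w)^{\beta}\bigr),
\]
with $C'$ independent of $\delta$. The hypothesis $q<\frac12$ gives $\beta<1$. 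Hence $h$ and every $h_\delta$ have growth of order $\beta<1$ in each sector $|\arg w|\le\frac\pi2-\epsilon$. Moreover $h$ decays along $\R_{>0}$ like $\e^{-\delta w}$ for arbitrarily large $\delta$, because $\sup\delta=+\infty$ when $\inf\mathcal{Y}=-\infty$.

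\emph{Conclusion.} A Phragm\'en--Lindel\"of/Carlson-type argument then forces $h\equiv0$. Hence $u(\cdot,-\eta_1)=0$, and then $u(\cdot,-\eta)=0$ for all $\eta\in-\mathcal{Y}$. Since $\mathcal{Y}$ is unbounded below, rectangles $[0,a']\times[-\eta,0]$ with arbitrarily large $\eta$ have zero data on both characteristic edges, so Theorem \ref{thm:Picard-2} gives $u\equiv0$.

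\emph{Main obstacle.} The hard step is the Phragm\'en--Lindel\"of conclusion, because the bound on $h_\delta$ deteriorates near the imaginary axis like $\exp(C|w|^{2\beta}(\Re w)^{-\beta})$. I would first apply the classical result in closed subsectors $|\arg w|\le\frac\pi2-\epsilon$. There $h$ is of order $\beta<1$ and $\log|h(r)|\le -\delta r+\Ordo(r^\beta)$ for every $\delta$. A sector of opening below $\pi$ with order below $\pi/(\text{opening})$ then makes $\e^{\delta w}h$ bounded on the sector for each $\delta$, hence $h\equiv0$. The two points to verify are: (i) the constants $C'$ are uniform in $\delta$, which is where the precise form of the Laplace-transform bound matters; and (ii) the sector opening can be chosen close enough to $\pi$ relative to $\beta<1$ for this Phragm\'en--Lindel\"of step to apply. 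Point (ii) is where $q<\frac12$ should enter, and it is what I would check first. If it fails, the fallback is Beurling's log-integral form of the half-plane Phragm\'en--Lindel\"of principle.
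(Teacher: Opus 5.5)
The first steps of your proposal are correct. That covers the reduction to $\Rop[f,0]$, the two-level identity $h=\e^{-\delta w}h_\delta$, the bound $|h_\delta(w)|\le C_\delta\exp\bigl(C'(|w|^2/\Re w)^{\beta}\bigr)$ with $\beta=q/(1-q)<1$, and the decay $\log|h(r)|/r\to-\infty$ along $\R_{>0}$. They are the paper's estimates (the growth bound on $\Lop[f]$ and $\lim_{\sigma\to0^+}\sigma\log|\Lop[f](\sigma)|=-\infty$) rewritten in the variable $w=1/\zeta$. Comparing two levels neatly avoids needing a growth bound on $f=u(\cdot,0)$, which the statement does not give directly. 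Point (i) is a non-issue, since $C'r^\beta=\ordo(r)$ for any fixed constant.

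\textbf{The gap: no sector argument can finish.} The concluding step fails, and checking (ii) cannot repair it. No argument confined to a sector $S_\epsilon=\{|\arg w|\le\frac\pi2-\epsilon\}$ can give $h\equiv0$, whatever $\epsilon>0$. To see this, take $1<\kappa<\pi/(\pi-2\epsilon)$ and $h_0(w)=\exp(-w^\kappa)$. Then:
\begin{itemize}
\item $\Re w^\kappa\ge c|w|^\kappa$ on $S_\epsilon$, so $h_0$ is bounded there;
\item $\e^{\delta w}h_0$ is bounded on $S_\epsilon$ for every $\delta$;
\item $\log|h_0(r)|=-r^\kappa$;
\item yet $h_0\not\equiv0$.
\end{itemize}
So the inference ``hence $h\equiv0$'' is false. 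Your claim that $\e^{\delta w}h$ is bounded on the sector is also unsupported. On the edges you only know $|\e^{\delta w}h|=|h_\delta|\le C_\delta\exp\bigl(C'(r/\sin\epsilon)^\beta\bigr)$, so Phragm\'en--Lindel\"of has nothing to start from. What excludes $h_0$ is its growth $\exp(c|w|^\kappa)$, $\kappa>1$, near $\imag\R$. Hence the proof must use the half-plane bound all the way up to the imaginary axis, and that is where $q<\frac12$ has to enter a second time.

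\textbf{The missing ingredient.} You need the Hayman--Korenblum (analytic non-quasianalyticity) step that the paper invokes via Corollary \ref{cor:NT-3}. Take a domain $\Omega=\{|\arg w|<\frac\pi2-\epsilon(|w|)\}$ that satisfies two competing requirements.
\begin{itemize}
\item It must approach the half-plane fast enough, $\int^\infty\epsilon(r)\,\diff r/r<\infty$. Then by Ahlfors--Warschawski its conformal map onto a half-plane is comparable to the identity on $\R_{>0}$. So decay faster than every $\e^{-\delta r}$, the critical rate of atomic singular inner functions, survives the map.
\item It must approach slowly enough that the boundary majorant $\log|h|\lesssim(r/\sin\epsilon(r))^\beta$ is Poisson-integrable, $\int^\infty r^{\beta-2}\epsilon(r)^{-\beta}\,\diff r<\infty$. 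Then a bounded outer function $G$ makes $hG$ bounded; Phragm\'en--Lindel\"of applies because the interior growth has order $\beta(1+a)<1$.
\end{itemize}
With $\epsilon(r)=r^{-a}$ and $0<a<(1-\beta)/\beta$, both conditions hold exactly because $\beta<1$. This is the paper's condition $\int_0^1\sqrt{\kappa(t)/t}\,\diff t<\infty$ with $\kappa(t)\asymp t^{-q/(1-q)}$. Theorem \ref{thm:NT-1}, applied to $hG$ transplanted to the disk, then gives $h\equiv0$.

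Your ``log-integral'' fallback would not suffice either, if taken in its simplest form on a half-plane $\Re w>\tau$. There you only get $\log^+|h(\tau+\imag y)|\lesssim|y|^{2\beta}$, which is integrable against $\diff y/(1+y^2)$ only when $q<\frac13$.
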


The proof of Theorem \ref{thm:main-2} is based on the work of Hayman and
Korenblum \cite{HayKor0}, \cite{HayKor}, and on Korenblum's ICM address \cite{Kor3},
which in turn mentions the connection with Beurling's contribution. It is supplied in 
Subsection \ref{ss:Korenblum-1}.

In Theorem \ref{thm:main-2}, we do not require any particular growth control on $u(x,y)$
in the negative $y$ direction, and the price we pay is a rather strict growth
bound in the positive $x$ direction. In Theorem \ref{thm:main-1-0}, on the
other hand, we have basically equal control in both directions. There is
a compromise which produces intermediate results. To formulate it, we
need the following concept, which requires a subset along the semi-axis to
be evenly spread-out according to a certain density.

\begin{defn}
For $\frac12<q<1$ and $q'':=q/(2q-1)$, a subset $\calY\subset\R_{\le0}$ is said 
to \emph{induce an asymptotic $q$-covering} of $\R_{\le0}$ 
if there exists a constant $M\in\R_{>0}$ 
such that
\[
]-\infty,y_1]\subset\bigcup_{y\in\calY}[y-MR(y,q),y+MR(y,q)]
\] 
holds for some $y_1\in\R_{<0}$, where
\[
R(y,q):=
|y|\,\min\{1,|y|^{-\frac{q''}{2}}\}.
\]
\end{defn}

\begin{rem}
For $\frac12<q<1$ we know that $1<q''<+\infty$. For $q$ close to $1$, 
$q''$ gets close to $1$, so that this condition allows 
for rather sparse sets $\calY$, whereas for $q$ close to $\frac12$, $q''$
gets close to $+\infty$, and then the set $\calY$ has to be much denser. 
\end{rem}

\begin{thm}
Suppose $u$ is a continuous complex-valued function in the quarter-plane
$\R_{\ge0}\times\R_{\le0}$, which solves the Klein-Gordon equation
$u''_{xy}+u=0$ in the sense of distribution theory in the interior quarter-plane
$\R_{>0}\times\R_{<0}$.
Suppose moreover that $u$ vanishes along the semi-axis $\{0\}\times \R_{\le0}$.
Finally, we assume that $u$ meets the uniform growth bound
\[
|u(x,y)|=\Ordo\big(\exp(\theta_1|x|^{q}+\theta_2|y|^{q''})\big),\qquad
(x,y)\in\R_{\ge0}\times
\calY,  
\]
where $0<\theta_1,\theta_2<+\infty$, $\frac12<q<1$ and $q''=q/(2q-1)$,
and $0\in\calY$, where the subset $\calY\subset\R_{\le0}$ is supposed to induce 
an asymptotic $q$-covering of $\R_{\le0}$. If we have the bound
\[
\theta_1^{1/q}\theta_2^{1/q''}<\bigg(\frac{1}{q}\bigg)^{1/q}
\bigg(\frac{1}{q''}\bigg)^{1/q''}
\sin^2\frac{\pi}{2q},
\]
it then follows that the solution collapses to $u=0$ on all of
$\R_{\ge0}\times\R_{\le0}$. 
\label{thm:main-1-2}
\end{thm}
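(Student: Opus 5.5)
The plan is to pass to the Laplace transform in the $x$-variable, where the Klein--Gordon evolution in $y$ becomes multiplication by an explicit factor. Then I would show that the Laplace transform of the boundary datum $f(x):=u(x,0)$ must vanish, by a Phragm\'en--Lindel\"of argument near the point $p=0$.

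\emph{Step 1: the multiplier.} Since $u$ vanishes on $\{0\}\times\R_{\le0}$, Theorem \ref{thm:Picard-2} (applied on rectangles $[0,a']\times[b,0]$) identifies $u$ with Riemann's solution $\Rop[f,0]$, where $f(0)=0$. Thus
\[
u(x,y)=f(x)-\int_0^xJ_{0,1}(x-s,y)f(s)\,\diff s .
\]
A termwise computation from the series gives $\int_0^{+\infty}J_{0,1}(x,y)\e^{-px}\diff x=1-\e^{-y/p}$. Hence, writing $F(p)=\int_0^{+\infty}f(x)\e^{-px}\diff x$ and $U(p,y)=\int_0^{+\infty}u(x,y)\e^{-px}\diff x$, I expect
\[
U(p,y)=\e^{|y|/p}F(p),\qquad y\le0,\ \Re p>0 .
\]
The assumed growth bound with $q<1$ makes all these integrals converge absolutely for $\Re p>0$ and every $y\in\calY$. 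Here $0\in\calY$ is used to control $f$.

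\emph{Step 2: a two-sided estimate.} Write $p=\sigma+\imag t$ with $\sigma>0$. For $y\in\calY$ the growth bound yields
\[
|F(p)|\,\e^{|y|\sigma/|p|^2}\le C\exp\big(\theta_2|y|^{q''}\big)\int_0^{+\infty}\e^{\theta_1x^q-\sigma x}\diff x .
\]
A Legendre-transform (Laplace method) computation bounds the integral by $\exp\big(A(\theta_1,q)\,\sigma^{-r}(1+\ordo(1))\big)$, where $r=q/(1-q)>1$.

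Next I would minimize $\theta_2|y|^{q''}-|y|\sigma/|p|^2$ over $y$. Since $q''/(q''-1)=r$, the minimum equals $-B(\theta_2,q'')\,(\sigma/|p|^2)^{r}$. The optimizing $|y|$ is of size $(\sigma/|p|^2)^{1/(q''-1)}$. This is exactly where the asymptotic $q$-covering hypothesis on $\calY$ enters: it lets me pick $y\in\calY$ within $MR(y,q)$ of the optimum. I expect the resulting error in the exponent, of order $|y|^{1-q''/2}\sigma/|p|^2$, to be $\ordo$ of the main term $|y|^{q''}$, so the constants are preserved asymptotically. Checking this bookkeeping is routine but must be done carefully.

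\emph{Step 3: the sector argument.} Put $w=1/p$, which maps $\Re p>0$ onto $\Re w>0$, and write $w=|w|\e^{\imag\varphi}$. Then $\sigma=\cos\varphi/|w|$ and $\sigma/|p|^2=|w|\cos\varphi$. The estimate from Step 2 becomes
\[
\log|F(1/w)|\le |w|^{r}\big(A\cos^{-r}\varphi-B\cos^{r}\varphi\big)+\ordo(|w|^r),\qquad |w|\to+\infty .
\]
So $G(w)=F(1/w)$ is analytic in the right half-plane with growth of order $r$. Moreover, $G$ decays like $\exp(-c|w|^r)$ in the symmetric sector $\{|\varphi|<\varphi_0\}$, where $\cos^{2r}\varphi_0=A/B$.

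A Phragm\'en--Lindel\"of argument for order $r$ should then force $G\equiv0$, provided the decay sector is wide enough compared with the growth allowed in the complementary sectors. The route I would try is to compare $\log|G|$ with the $r$-trigonometric indicator $h(\varphi)$, which satisfies $h''+r^2h\ge0$. Decay at the rate $-c\cos^{r}$ on the whole sector, combined with the upper bound $A\cos^{-r}\varphi$ near $\varphi=\pm\pi/2$, should be incompatible with $h$ unless $G\equiv0$. Indeed $\pi/r=\pi(1-q)/q$ is less than $\pi$ precisely because $q>\tfrac12$.

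\emph{Main obstacle.} The hard step is extracting exactly the stated threshold $\theta_1^{1/q}\theta_2^{1/q''}<q^{-1/q}(q'')^{-1/q''}\sin^2\frac{\pi}{2q}$ from this indicator comparison. The explicit $A$ and $B$ produce the factors $q^{-1/q}$ and $(q'')^{-1/q''}$. The term $\sin^2(\pi/2q)$ should emerge from optimizing the indicator inequality across the sector boundary. I expect this to require choosing the comparison function $\Re(\e^{\imag\alpha}w^r)$ with a well-chosen rotation $\alpha$, and probably a Lorentz rescaling (Proposition \ref{prop:Poincare-1}) to balance $\theta_1$ against $\theta_2$.

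\emph{Conclusion.} Once $F\equiv0$, uniqueness of the Laplace transform gives $f\equiv0$. Then $u=\Rop[0,0]=0$ on $\R_{\ge0}\times\R_{\le0}$ by Theorem \ref{thm:Picard-2}.
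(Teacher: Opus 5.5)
Your Steps 1 and 2 follow the paper's argument. The paper stays in $\zeta$ near $0$ instead of inverting, and bounds the covering error by a constant using $\psi'(y_\star)=0$, but that is immaterial. The genuine gap is the concluding step. You leave it open, and the mechanism you anticipate (a rotated comparison function, a trade-off against the growth $A\cos^{-r}\varphi$ near $\varphi=\pm\pi/2$, a Lorentz rescaling) is not what produces the threshold. The rescaling cannot help at all: $(x,y)\mapsto(\lambda x,y/\lambda)$ replaces $(\theta_1,\theta_2)$ by $(\lambda^{q}\theta_1,\lambda^{-q''}\theta_2)$ and leaves $\theta_1^{1/q}\theta_2^{1/q''}$ unchanged. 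The growth outside the decay sector also plays no role.

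The missing observation is that the hypothesis is a statement about the aperture of your decay sector. Take the sharp values $A=(1-q)\theta_1(q\theta_1)^{q/(1-q)}$ and $B=(\tfrac1q-1)(q''\theta_2)^{-(2q-1)/(1-q)}$; these are the paper's $D(q,\theta_1)$ and $E(q,\theta_2)$. Then $\cos^2\varphi_0=(A/B)^{1/r}=(q\theta_1)^{1/q}(q''\theta_2)^{1/q''}$, while $\sin^2\frac{\pi}{2q}=\cos^2\frac{\pi(1-q)}{2q}=\cos^2\frac{\pi}{2r}$. So the assumed inequality says exactly $2\varphi_0>\pi/r$.

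Now fix $\pi/(2r)<\varphi_1<\varphi_0$. On $\{|\arg w|<\varphi_1\}$ the function $G$ is bounded: it decays for large $|w|$, and for small $|w|$ the point $p=1/w$ has $\Re p\to+\infty$, where $F$ is bounded. Moreover $\log|G(\rho)|\le -c\rho^{r}$ for large $\rho>0$. The map $\eta=w^{\pi/(2\varphi_1)}$ carries this sector onto the right half-plane. Along the positive axis the decay becomes $\log|G|\le -c|\eta|^{s}$ with $s=2r\varphi_1/\pi>1$, so $\rho^{-1}\log|G(\rho)|\to-\infty$. A bounded holomorphic function on a half-plane with this property vanishes, by Theorem \ref{thm:NT-1} after a Cayley map.

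This is precisely the paper's proof. Its sector $\Omega_3$ has aperture $\pi\alpha>\pi(1-q)/q$, it substitutes $\zeta=\eta^{\alpha}$, and it uses an outer function only to absorb a polynomial factor. In your indicator language the same fact reads $h(-\tfrac{\pi}{2r})+h(\tfrac{\pi}{2r})\ge 0$ for a nonzero function of finite type of order $r$ in a sector of opening larger than $\pi/r$. That contradicts $h<0$ on $(-\varphi_0,\varphi_0)$. Without identifying $2\varphi_0>\pi/r$ as the content of the hypothesis, your argument does not reach the stated threshold.
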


The proof of this theorem is supplied in Section \ref{sec:skewed-1}.

\begin{rem}
We believe that the result is essentially sharp. In particular, if the reverse
inequality
\[
\theta_1^{1/q}\theta_2^{1/q''}>\bigg(\frac{1}{q}\bigg)^{1/q}
\bigg(\frac{1}{q''}\bigg)^{1/q''}
\sin^2\frac{\pi}{2q},
\]
holds, then there should exist counterexamples. 
Here, Borichev's work \cite{Bor} could be a starting point, which in fact 
elaborates on an earlier pioneering insight by Mary Cartwright \cite{Cart}. 
\end{rem}

In terms of the parameter $q$ appearing in Theorems \ref{thm:main-2} and
\ref{thm:main-1-2}, the range $0<q<\frac12$ is covered by Theorem
\ref{thm:main-2}, while the range $\frac12<q<1$ constitutes Theorem
\ref{thm:main-1-2}, and $q=1$ amounts to Theorem \ref{thm:main-1-0}. 
The value $q=\frac12$ is not covered by any of these results, so it needs
to be treated separately.

\begin{thm}
Suppose $u$ is a continuous complex-valued function in the quarter-plane
$\R_{\ge0}\times\R_{\le0}$, which solves the Klein-Gordon equation
$u''_{xy}+u=0$ in the sense of distribution theory in the interior quarter-plane
$\R_{>0}\times\R_{<0}$.
Suppose moreover that $u$ vanishes along the semi-axis $\{0\}\times \R_{\le0}$.
Finally, we assume that 
$u$ meets the uniform growth bound
\[
|u(x,y)|=\Ordo\big(\exp\big(\theta_1\sqrt{|x|}+\exp(\theta_2\sqrt{|y|})\big)
\big),
\qquad
(x,y)\in\R_{\ge0}\times\R_{\le0},
\]
where the parameters $\theta_1,\theta_2\in\R_{>0}$ satisfy
$\theta_1\theta_2<2\pi$. 
It then follows that the solution collapses to $u=0$ on all of
$\R_{\ge0}\times\R_{\le0}$. 
\label{thm:main-3}
\end{thm}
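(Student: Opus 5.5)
We plan to reduce the statement to a uniqueness theorem for an analytic function obtained from $u$ by a Laplace transform in $x$. The growth in $y$ will then become a decay condition that is very strong but not quite strong enough to be trivial; this matches the remark in the paper that $q=\frac12$ is the borderline between Theorems \ref{thm:main-2} and \ref{thm:main-1-2}.

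\textbf{Step 1: the transform identity.} The bound $|u(x,y)|\le C\exp(\theta_1\sqrt{x}+\exp(\theta_2\sqrt{|y|}))$ is subexponential in $x$ for each fixed $y$. So for $y\le0$ and $\Re z>0$ we may define
\[
F_y(z):=\int_0^{+\infty}u(x,y)\,\e^{-zx}\,\diff x,
\]
which is holomorphic in the right half-plane. We use the distributional equation $u''_{xy}=-u$ together with $u(0,y)=0$; it is convenient to use the quadrature identity \eqref{eq:quadrature-1} so that no pointwise derivatives are needed. Integrating by parts in $x$, this should give $\partial_yF_y(z)=-z^{-1}F_y(z)$, and hence
\[
F_y(z)=F_0(z)\,\e^{|y|/z},\qquad y\le0,\ \Re z>0 .
\]
Put $G(w):=F_0(1/w)$, which is holomorphic for $\Re w>0$.

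\textbf{Step 2: growth and decay of $G$.} Since $\e^{\theta_1\sqrt x-\sigma x}\le \e^{\theta_1^2/(4\sigma)}$ with $\sigma=\Re z$, we get for every $y\le0$ the estimate
\[
|G(w)|\lesssim \frac{|w|^2}{\Re w}\exp\Big(\frac{\theta_1^2|w|^2}{4\Re w}+\exp(\theta_2\sqrt{|y|})-|y|\,\Re w\Big).
\]
We optimize in $y$, separately for each $w$. With $t=\Re w$, the choice $\sqrt{|y|}\approx\theta_2^{-1}\log t$ makes $\exp(\theta_2\sqrt{|y|})$ negligible compared with $|y|\,t$. This gives decay of the type $\exp\big(-\theta_2^{-2}t(\log t)^2(1+\ordo(1))\big)$ as $t\to+\infty$. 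The price is the factor $\exp(\theta_1^2|w|^2/(4t))$, which only matters when $|\Im w|$ is large compared with $t$.

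\textbf{Step 3: forcing $G\equiv0$ (main obstacle).} Given two-sided estimates of this kind, $G\equiv0$ should follow from a Phragm\'en--Lindel\"of / logarithmic-integral argument of Beurling--Carleman type. The plan is as follows.
\begin{enumerate}[(i)]
\item Restrict $G$ to a suitable region adapted to the optimization above, for example a parabolic or logarithmic region $\{|\Im w|\le \kappa\, t\log t\}$, on which the growth term $\theta_1^2|w|^2/(4t)$ is at most of size $\kappa^2\theta_1^2 t(\log t)^2/4$.
\item Map this region conformally onto a strip by a logarithmic change of variables.
\item Compare the decay of $\log|G|$ along the axis with the harmonic measure of the strip.
\end{enumerate}
The decay exponent $\theta_2^{-2}t(\log t)^2$ must beat the growth allowed by the harmonic measure, which carries a factor $\pi$ from the strip width. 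The constant should then come out as $\theta_1^2\theta_2^2<4\pi^2$, that is $\theta_1\theta_2<2\pi$, once $\kappa$ is optimized.

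This sharp bookkeeping is where I expect the real difficulty. It requires choosing $y$ depending on both $t$ and $\Im w$ and getting the constant exactly right; a cruder argument would only give the conclusion for $\theta_1\theta_2$ small. If the direct strip argument does not produce $2\pi$, I would instead invoke a Hayman--Korenblum type theorem, as the paper does for Theorem \ref{thm:main-2}, applied to $\log|G|$ on horodisks $\{\Re w>t\}$.

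\textbf{Step 4: conclusion.} From $G\equiv0$ we get $F_0\equiv0$. By uniqueness of the Laplace transform, $f(x)=u(x,0)$ vanishes for all $x\ge0$. Since $u$ also vanishes on $\{0\}\times\R_{\le0}$, Theorem \ref{thm:Picard-2}, applied on the rectangles $[0,a']\times[b,0]$, shows that $u=0$ on all of $\R_{\ge0}\times\R_{\le0}$.
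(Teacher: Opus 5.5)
\textbf{There is a gap in Step 3.} Your outline follows the paper's route: the Laplace transform in $x$ with the evolution $F_y=F_0\,\e^{|y|/z}$ (deriving this from the quadrature identity is a fine substitute for the paper's use of Riemann's solution), then optimization in $y$, giving decay $\exp(-\theta_2^{-2}t\log^2t\,(1+\ordo(1)))$ with $t=\Re w$, then the inversion $w=1/z$, and finally a Phragm\'en--Lindel\"of argument on a region like $U_\kappa=\{|\mathrm{Im}\,w|<\kappa t\log t\}$. For $\kappa<2/(\theta_1\theta_2)$, $U_\kappa$ is asymptotically the paper's $\Omega_2$ with $\theta_3=2/\kappa$. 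But Step 3 carries the entire content of the theorem, namely the constant $2\pi$, and you only assert it.

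The missing ingredient is the growth of the Poisson kernel at infinity, $P_\infty$, of $U_\kappa$. In logarithmic coordinates $U_\kappa$ is not a strip of width $\pi$. It is a strip of width $\pi-2\epsilon(r)$ with $\epsilon(r)\sim 1/(\kappa\log r)$. For a true strip of width $\pi$ one would have $P_\infty(t)\asymp t$, and the $t\log^2t$ decay would win for every $\theta_1\theta_2$. So the threshold comes entirely from this slow narrowing, not from the width $\pi$ alone. The Ahlfors--Warschawski theorem, as used in the paper, gives
\[
P_\infty(t)\asymp\exp\Big(\int^{t}\frac{\pi}{\pi-2\epsilon(r)}\,\frac{\diff r}{r}\Big)\asymp t\,(\log t)^{2/(\pi\kappa)},\qquad t\to+\infty .
\]

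With this, the bookkeeping is short.
\begin{enumerate}[(i)]
\item If $\kappa<2/(\theta_1\theta_2)$, then the exponent $\theta_1^2|w|^2/(4t)-\theta_2^{-2}t\log^2t\,(1+\ordo(1))$ is $\le -c\,t\log^2t$ on $U_\kappa$ for large $t$. Since $|w|\lesssim t\log t$ there, this absorbs the polynomial prefactors, so $G$ is bounded on $U_\kappa$ and no outer function is needed.
\item If $\kappa>1/\pi$, then $2/(\pi\kappa)<2$, so $\log|G(t)|\le-\lambda P_\infty(t)$ eventually, for every $\lambda>0$.
\item Map $U_\kappa$ symmetrically onto a half-plane, with $\infty\mapsto\infty$, and apply Theorem \ref{thm:NT-1} via the Cayley transform. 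This gives $G\equiv0$.
\end{enumerate}
A $\kappa$ satisfying both (i) and (ii) exists exactly when $\theta_1\theta_2<2\pi$.

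Three smaller points.
\begin{itemize}
\item You do not need $y$ to depend on $\mathrm{Im}\,w$. In your own bound, the $y$-dependent part $\exp(\theta_2\sqrt{|y|})-|y|\Re w$ involves only $\Re w$.
\item Your fallback would fail. At $q=\frac12$ the growth function in the Hayman--Korenblum condition \eqref{eq:ANQA-1} is $\asymp t^{-1}$, so the integral diverges. That is precisely why this borderline case needs the sharper harmonic-measure argument.
\item In Step 2, the pointwise bound $\e^{\theta_1\sqrt x-\sigma x}\le\e^{\theta_1^2/(4\sigma)}$ does not control the integral over $\R_{\ge0}$. Use Lemma \ref{lem:basic-est-1} instead, or give up a factor $1-\delta$ of $\sigma$; the strict inequality $\theta_1\theta_2<2\pi$ tolerates that loss.
\end{itemize}
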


The proof of this theorem is supplied in Section \ref{sec:critical-1}.

\begin{rem}
(a) It is possible to replace the whole half-line $\R_{\le0}$ by a discrete subset $\calY$,
subject to a density condition, along the lines of Theorem \ref{thm:main-1-2}. This
time, the distance between consecutive points in $\calY$ should be on the order of
magnitude $\exp(-\frac12\theta_2|y|^{-\frac12})$ for large negative $y$.

\noindent{(b)} We expect the parameter range $\theta_1\theta_2<2\pi$ to be sharp, in the
sense that for $\theta_1\theta_2>2\pi$, it should be possible to find counterexamples.
\end{rem}

\begin{rem}
The remaining parameter range $1<q<+\infty$ is beyond the treatment with the standard 
Laplace transform, which is our main tool here. However, it may be possible to use in its
place an approximate Laplace transform like in the theory of almost holomorphic functions 
due to Evse\u\i{} Dyn'kin, and used later by, e.g., Volberg \cite{Volb}, and Borichev and 
Hedenmalm \cite{BorHed}.
\end{rem}

\subsection{Acknowledgements}
As mentioned earlier,  this work continues early insights by Andrew Bakan, dating back 
many years. We definitely appreciate his contributions to the topic.

\section{Horizontal unilateral wave solutions}

\subsection{An example of growth in a spacelike quadrant}
\label{ss:growth-1}

We consider the Riemann solution $u_1=\Rop[f_1,0]$, where the continuous
function $f_1$ is given by
\[
f_1(x):=
\begin{cases}
0,\qquad x\le 0,
\\
x,\qquad 0<x<+\infty.
\\
1,\qquad x\ge1.  
\end{cases}
\]
Then $u_1$ is a horizontal unilateral wave solution, expressed by
\[
u_1(x,y)=\Rop[f_1,0](x,y)=f_1(x)-\int_0^x J_{0,1}(x-s,y)f_1(s)\,\diff s.
\]
Since we study $y\in\R_{\le0}$, it is convenient to flip $y\mapsto-y$:
\begin{multline}
u_1(x,-y)=\Rop[f_1,0](x,-y)=
f_1(x)-\int_0^x J_{0,1}(x-s,-y)f_1(s)\,\diff s
\\
=f_1(x)-\int_0^x \sum_{j=0}^{+\infty}\frac{(-1)^j}{j!(j+1)!}(x-s)^j(-y)^{j+1}
f_1(s)\,\diff s
\\
=f_1(x)+\sum_{j=0}^{+\infty}\frac{y^{j+1}}{j!(j+1)!}\int_0^x (x-s)^j
f_1(s)\,\diff s.
\end{multline}
Since $0\le f_1(x)\le1$ holds for $x\in\R_{\ge0}$, it now follows that
\begin{multline}
0\le u_1(x,-y)\le 1+\sum_{j=0}^{+\infty}\frac{y^{j+1}}{j!(j+1)!}\int_0^x (x-s)^j
\diff s
\\
=1+\sum_{j=0}^{+\infty}\frac{(xy)^{j+1}}{[(j+1)!]^2}=J_{0,0}(x,-y),
\qquad (x,y)\in\R_{\ge0}\times\R_{\ge0}. 
\label{eq:u1-est-1}
\end{multline}
Next, the function $J_{0,0}$ 
may be expressed in terms of the standard Bessel function:
\[
J_{0,0}(x,-y)=J_0(2\sqrt{-xy}),
\]
Classical asymptotics of the Bessel function gives that as $xy\to+\infty$
while $(x,-y)$ is in the spacelike quarter-plane where $x,y>0$, we
have that
\begin{equation}
J_{0,0}(x,-y)=(1+\ordo(1))\frac{1}{2\sqrt{\pi}}\,(xy)^{-1/4}
\exp\big(2\sqrt{xy}\big),
\label{eq:uinfty-2}
\end{equation}
whereas if $xy$ is kept bounded, we have
\[
J_{0,0}(x,-y)=\Ordo(1).
\]

\begin{proof}[Proof of Theorem \ref{thm:main-1-1}]
By Young's inequality for products, in fact, in this case,
just by completing squares, we have that
\[
2\sqrt{xy}\le \theta_1 x+\theta_1^{-1}y\le \theta_1 x+\theta_2 y,
\qquad x,y\in\R_{\ge0},  
\]
given that $\theta_1\theta_2\ge1$ entails that $\theta_2\ge\theta_1^{-1}$.
It now follows from the above control on $J_{0,0}(x,-y)$ combined with the
inequalities in \eqref{eq:u1-est-1} that
\[
u_1(x,-y)=\Ordo(\exp(\theta_1 x+\theta_2 y)),\qquad (x,y)\in
\R_{\ge0}\times\R_{\ge0},
\]
holds uniformly. Moreover, since $u_1(x,0)=f_1(x)$ holds for $x\in\R$,
it follows that $u_1$ is a nontrivial solution to the Klein-Gordon equation
which in particular does not vanish identically on
$\R_{\ge0}\times\R_{\le0}$ while it enjoys the claimed growth estimate.
\end{proof}

\subsection{The evolution equation for horizontal unilateral wave solutions}
We consider a general horizontal unilateral wave solution
$u_{\mathrm{horz}}=\Rop[f,0]$, where $f$ is a continuous function on the
line with $f(0)=0$. Explicitly, $u_{\mathrm{horz}}$ is
given by
\begin{equation}
u_{\mathrm{horz}}(x,y)=f(x)-\int_0^x J_{0,1}(x-s,y)f(s)\,\diff s.
\label{eq:Ropf0-1}
\end{equation}
The formula \eqref{eq:Ropf0-1} is of convolution type on the half-line, which
suggests that we might try to analyze it using the classical Laplace transform.
To this end, we assume that $f$ does not grow faster than exponentially in
the mean. More explicitly, we require that 
\[
\forall\sigma>\sigma_0:\quad
\int_0^{+\infty}\e^{-x\sigma}|f(x)|\diff x<+\infty,
\]
holds for some parameter value $\sigma_0\in\R_{\ge0}$.
Then the Laplace transform $\Lop[f]$ given by
\[
\Lop[f](\zeta)=\int_0^{+\infty}\e^{-x\zeta}f(x)\,\diff x,\qquad \Re \zeta>\sigma_0,
\]
defines a holomorphic function in the given half-plane, and if we write
\[
[J_{0,1}(\cdot,y)\ast f](x)=
\int_0^x J_{0,1}(x-s,y)f(s)\,\diff s
\]
for the semigroup convolution, the Laplace transform of the convolution
is just the product of the two individual Laplace transforms,
\[
\Lop[J_{0,1}(\cdot,y)\ast f](\zeta)=\Lop[J_{0,1}(\cdot,y)](\zeta)\Lop[f](\zeta),
\]
where
\[
\Lop[J_{0,1}(\cdot,y)](\zeta)=\int_0^{+\infty}\e^{-x\zeta}J_{0,1}(x,y)\,\diff x
=1-\e^{-y/\zeta},\qquad \Re\zeta>0.
\]
Although known, this calculation is surprisingly hard to find in the literature.
It is, however, rather straightforward:
\begin{multline}
\int_0^{+\infty}\e^{-x\zeta}J_{0,1}(x,y)\,\diff x
=\sum_{j=0}^{+\infty}\frac{(-1)^j}{j!(j+1)!}
\int_{0}^{+\infty}x^j\,y^{j+1}\e^{-x\zeta}\diff x
\\
=\sum_{j=0}^{+\infty}\frac{(-1)^j}{(j+1)!}y^{j+1}\zeta^{-j-1}=1-\e^{-y/\zeta},
\end{multline}
again provided that $\Re\zeta>0$.
It is now immediate from \eqref{eq:Ropf0-1} that
\begin{multline}
\Lop[u_{\mathrm{horz}}(\cdot,y)](\zeta)=
\int_0^{+\infty}\e^{-x\zeta} u_{\mathrm{horz}}(x,y)\,\diff x
=\Lop[f](\zeta)-\Lop[J_{0,1}(\cdot,y)\ast f](\zeta)
\\
= \Lop[f](\zeta)-\Lop[J_{0,1}(\cdot,y)](\zeta)\Lop[f](\zeta)
=\e^{-y/\zeta}\Lop[f](\zeta),\qquad \Re\zeta>\sigma_0.
\label{eq:Ropf0-2}
\end{multline}
The equation \eqref{eq:Ropf0-2} is fundamental to our analysis of
the solutions $u_{\mathrm{horz}}=\Rop[f,0]$. The evolution in the $y$ parameter
is simple, and indeed, it resembles the pattern used
in the scattering method to analyze certain nonlinear integrable partial
differential equations such as the KdV (Korteweg-de Vries) equation. 
Basically, we perform the so-called scattering transformation, and afterwards
apply a smooth flow evolution,
and then get back to the original setting via inverse scattering. For the
basics on the method, see, e.g., the books \cite{BDT}, \cite{BPT}.

\subsection{Uniqueness in the exponential growth case $q=1$}
\label{ss:q=1}

We turn to the proof of the simplest instance of the Liouville-type phenomenon
for the Klein-Gordon equation.

\begin{proof}[Proof of Theorem \ref{thm:main-1-0}]
We let $f(x):=u(x,0)$ for $\in\R_{\ge0}$, and observe that by continuity,
we must have $f(0)=0$. Next, Riemann's solution $\Rop[f,0]$ is well-defined in
the half-plane $\R_{\ge0}\times\R$, and vanishes on the half-line $\{0\}\times
\R$. Then the difference function $u-\Rop[f,0]$ is continuous, solves
the Klein-Gordon equation in the sense of distribution theory, and vanishes
along the two characteristic semi-axes $\R_{\ge0}\times\{0\}$ and
$\{0\}\times\R_{\le0}$. By Theorem \ref{thm:Picard-2}, then, $u-\Rop[f,0]=0$
on $\R_{\ge0}\times\R_{\le0}$, so that $u=\Rop[f,0]$ holds there, so that we
may think of $u=u_{\mathrm{horz}}$ as a unilateral horizontal wave solution.
The given growth control on $u=\Rop[f,0]$ says that
\[
|u(x,y)|\le C_0\exp(\theta_1 x+\theta_2 |y|),\qquad (x,y)\in\R_{\ge0}
\times\mathcal{Y},
\]
for some positive constant $C_0$. In particular, since $0\in\mathcal{Y}$,
we have
\[
|f(x)|=|u(x,0)|\le C_0\exp(\theta_1 x),\qquad x\in\R_{\ge0}.
\]
The Laplace transform
\[
\Lop[f](\zeta)=\int_0^{+\infty}\e^{-t\zeta}f(t)\,\diff t,\qquad \Re\zeta>\theta_1,
\]
is then holomorphic in the indicated half-plane, with the growth bound
\[
|\Lop[f](\zeta)|\le\frac{C_0}{\Re\zeta-\theta_1},\qquad
\Re\zeta>\theta_1.  
\]
Given the growth control on the solution $u=u_{\mathrm{horz}}=\Rop[f,0]$,
we find analogously that
\[
|\Lop[u(\cdot,y)](\zeta)|\le\frac{C_0\exp(\theta_2|y|)}{\Re\zeta-\theta_1},
\qquad\Re\zeta>\theta_1,\,\,\,y\in \mathcal{Y}.  
\]
Finally, in view of the evolution equation for the Laplace transform
\eqref{eq:Ropf0-2}, we know that 
\[
|\e^{-y/\zeta}\Lop[f](\zeta)|\le\frac{C_0\exp(\theta_2|y|)}{\Re\zeta-\theta_1},
\qquad\Re\zeta>\theta_1,\,\,\,y\in \mathcal{Y}, 
\]
and hence
\[
|\Lop[f](\zeta)|\le\frac{C_0}{\Re\zeta-\theta_1}\,
\exp\big(\theta_2|y|+\Re(y/\zeta)\big),
\qquad\Re\zeta>\theta_1.
\]
The left-hand side does not depend on $y\in\mathcal{Y}$, so we are free
to minimize over $y$. Since $y=-|y|$, we find that
\[
|\Lop[f](\zeta)|\le\frac{C_0}{\Re\zeta-\theta_1}\,
\inf_{y\in\mathcal{Y}}\,\exp\big(|y|(\theta_2-\Re(1/\zeta))\big),
\qquad\Re\zeta>\theta_1,\,\,\,y\in \mathcal{Y},
\]
which gives that
\[
|\Lop[f](\zeta)|=0\quad\text{if}\,\,\,\Re\zeta>\theta_1\,\,\,
\text{and}\,\,\,\Re(1/\zeta)>\theta_2.   
\]
Given that $\theta_1\theta_2<1$, the indicated region of vanishing is
open and nonempty, so that by the uniqueness theorem for holomorphic functions,
it follows that $\Lop[f]=0$ identically, an hence $f=0$ on $\R_{\ge0}$.
Consequently, the solution $u=\Rop[f,0]$ vanishes in the half-plane $\R_{\ge0}
\times\R$, and, in particular, in the quarter-plane $\R_{\ge0}\times\R_{\le0}$. 
\end{proof}

\section{An auxiliary integral estimate}

\subsection{Estimation of an integral}  

The following estimate is probably known but we have found no suitable
reference.

\begin{lem}
Suppose that $0<q<1$ and $0<\theta,\sigma<+\infty$. Then 
\begin{equation}
\int_0^{+\infty}\exp(\theta\,t^{q}-\sigma t)\, \diff t
\le \frac{1}{\sigma}\big(A(q,\theta) \sigma^{-\frac{q}{2(1-q)}}+B(q)\big)
\,\exp\big(D(q,\theta)\,\sigma^{-\frac{q}{1-q}}\big),
\end{equation}
where 
\[
A(q,\theta)=2^{1-\frac{q}{2}}(q\theta)^{\frac{1}{2(1-q)}}\sqrt{\frac{2\pi}{1-q}}\, ,
\quad B(q)=\frac{4}{1-q},\quad D(q,\theta)=(1-q)\theta (q\theta)^{\frac{q}{1-q}}.
\]
\label{lem:basic-est-1}
\end{lem}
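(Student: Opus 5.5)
The plan is a Laplace-method estimate for the phase $\varphi(t):=\theta t^{q}-\sigma t$ on $\R_{>0}$. Since $0<q<1$, $\varphi$ is strictly concave, with $\varphi''(t)=-q(1-q)\theta t^{q-2}<0$. It has a unique critical point at
\[
t_0=\Big(\frac{q\theta}{\sigma}\Big)^{\frac{1}{1-q}},
\]
where $\theta t_0^q=\sigma t_0/q$. Hence
\[
\varphi(t_0)=\frac{1-q}{q}\,\sigma t_0=(1-q)\theta(q\theta)^{\frac{q}{1-q}}\sigma^{-\frac{q}{1-q}}=D(q,\theta)\,\sigma^{-\frac{q}{1-q}},
\]
which is exactly the exponent in the claimed bound. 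So it remains to show that $\int_0^{+\infty}\e^{\varphi(t)-\varphi(t_0)}\diff t$ is at most $\sigma^{-1}(A\sigma^{-q/(2(1-q))}+B)$. I will split the integral at $2t_0$.

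\emph{Inner piece, $[0,2t_0]$.} The function $|\varphi''(t)|=q(1-q)\theta t^{q-2}$ is decreasing in $t$, so on $(0,2t_0]$ it is at least $\kappa:=q(1-q)\theta(2t_0)^{q-2}$. Taylor's formula with $\varphi'(t_0)=0$ then gives $\varphi(t)\le\varphi(t_0)-\frac{\kappa}{2}(t-t_0)^2$ on this interval. Extending the Gaussian integral to all of $\R$ bounds the inner piece by $\sqrt{2\pi/\kappa}\,\e^{\varphi(t_0)}$. Inserting $t_0$, one finds $q\theta t_0^{q-2}=(q\theta)^{-\frac1{1-q}}\sigma^{\frac{2-q}{1-q}}$, so
\[
\sqrt{2\pi/\kappa}=2^{1-\frac q2}(q\theta)^{\frac{1}{2(1-q)}}\sqrt{\frac{2\pi}{1-q}}\;\sigma^{-1}\sigma^{-\frac{q}{2(1-q)}},
\]
which is precisely the $A(q,\theta)$ term.

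\emph{Tail, $[2t_0,+\infty)$.} By concavity, $\varphi'(t)\le\varphi'(2t_0)=\sigma(2^{q-1}-1)$ for $t\ge2t_0$. The map $s\mapsto1-2^{-s}$ is concave on $[0,1]$ and equals $\tfrac12$ at $s=1$, so $1-2^{-s}\ge s/2$ there; with $s=1-q$ this gives $\varphi'(t)\le-\sigma(1-q)/2$. Consequently
\[
\varphi(t)\le\varphi(2t_0)-\frac{\sigma(1-q)}{2}(t-2t_0)\le\varphi(t_0)-\frac{\sigma(1-q)}{2}(t-2t_0),
\]
and integrating yields a tail bound of $\frac{2}{(1-q)\sigma}\e^{\varphi(t_0)}$. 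This is within the stated $B(q)/\sigma=\frac{4}{(1-q)\sigma}$, with a factor $2$ to spare.

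Adding the two pieces gives the lemma. The only delicate point is the bookkeeping of exponents in the inner piece, where the lower bound for $|\varphi''|$ must be taken at the right endpoint $2t_0$, since $|\varphi''|$ decreases. That choice is what produces the factor $2^{1-q/2}$ appearing in $A$.
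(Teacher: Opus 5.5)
Your proof is correct and follows the paper's argument essentially step by step. The paper also splits at $t_1=2t_0$ and bounds the inner piece by a Gaussian, using the curvature at the right endpoint $2t_0$ via Taylor's formula with Lagrange remainder and monotonicity of the second derivative. It bounds the tail by the supporting line at $2t_0$, with all of this packaged as a general auxiliary lemma for $\psi=-\varphi$ with $\psi''>0$, $\psi'''<0$.

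The only difference is the tail slope. The paper uses the mean value theorem to get $\psi'(2t_0)\ge t_0\psi''(2t_0)=(1-q)2^{q-2}\sigma\ge\frac14(1-q)\sigma$. You instead compute $\varphi'(2t_0)=-\sigma(1-2^{q-1})$ exactly and use $1-2^{-s}\ge s/2$, which is why you obtain $2/(1-q)$ in place of $B(q)=4/(1-q)$.
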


The key estimate to arrive at Lemma \ref{lem:basic-est-1} is the following.

\begin{lem}
Suppose $\psi:\R_{\ge0}\to\R$ is $C^3$-smooth on $\R_{>0}$,
with $\psi''>0$ and $\psi'''<0$ on $\R_{>0}$. If $t_0\in\R_{>0}$ is such
that $\psi'(t_0)=0$, and $0<t_0<t_1<+\infty$, we then have the estimate
\begin{equation}
\int_0^{+\infty}\exp(-\psi(t))\,\diff t\le \e^{-\psi(t_0)}\sqrt{\frac{2\pi}{\psi''(t_1)}}
+\frac{\e^{-\psi(t_1)}}{\psi'(t_1)}.
\end{equation}
\label{lem:basic-est-2}
\end{lem}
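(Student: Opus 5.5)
We split the integral at $t_1$ and estimate the two pieces separately:
\[
\int_0^{+\infty}\e^{-\psi(t)}\,\diff t=\int_0^{t_1}\e^{-\psi(t)}\,\diff t+\int_{t_1}^{+\infty}\e^{-\psi(t)}\,\diff t .
\]
Since $\psi''>0$, the function $\psi'$ is strictly increasing. Together with $\psi'(t_0)=0$ and $t_0<t_1$, this gives $\psi'(t)\ge\psi'(t_1)>0$ for all $t\ge t_1$.

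\emph{Tail piece.} For $t\ge t_1$ convexity gives $\psi(t)\ge \psi(t_1)+\psi'(t_1)(t-t_1)$. Integrating the resulting exponential bound over $[t_1,+\infty)$ yields
\[
\int_{t_1}^{+\infty}\e^{-\psi(t)}\,\diff t\le \frac{\e^{-\psi(t_1)}}{\psi'(t_1)},
\]
which is exactly the second term of the claimed estimate.

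\emph{Initial piece.} On $[0,t_1]$ I use a Laplace-type quadratic minorant around the critical point $t_0$. Because $\psi'''<0$, $\psi''$ is decreasing, so $\psi''(s)\ge\psi''(t_1)$ for all $s\in\,]0,t_1]$. Taylor's formula with integral remainder around $t_0$ then gives, for $t\in\,]0,t_1]$,
\[
\psi(t)=\psi(t_0)+\int_{t_0}^{t}(t-s)\,\psi''(s)\,\diff s\ge \psi(t_0)+\tfrac12\psi''(t_1)(t-t_0)^2 .
\]
This holds for $t$ on either side of $t_0$, since $(t-s)\,\diff s$ has a consistent sign in both cases. At the endpoint $t=0$, where $\psi$ is only known to be continuous, I apply the bound on $]0,t_1]$ and pass to the limit. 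We therefore obtain
\[
\int_0^{t_1}\e^{-\psi(t)}\,\diff t\le \e^{-\psi(t_0)}\int_{-\infty}^{+\infty}\exp\!\big(-\tfrac12\psi''(t_1)(t-t_0)^2\big)\,\diff t=\e^{-\psi(t_0)}\sqrt{\frac{2\pi}{\psi''(t_1)}},
\]
which is the first term of the claimed estimate. Adding the two pieces proves the lemma.

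\emph{Main obstacle.} The only delicate point is justifying the Taylor minorant uniformly on $]0,t_1]$. A second-derivative lower bound on $[t_0,t_1]$ alone is not enough; we also need it on $]0,t_0]$. This is precisely where $\psi'''<0$ enters: it makes $t_1$ the worst point for $\psi''$ on the whole interval $]0,t_1]$. Integrability near $0$ is harmless because $\psi$ is continuous on $\R_{\ge0}$.
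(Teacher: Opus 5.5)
Your proof is correct and is essentially the paper's argument: the same split at $t_1$, the same support-line bound for the tail, and the same quadratic minorant about $t_0$ coming from the monotonicity of $\psi''$, with your integral-form remainder in place of the paper's Lagrange remainder (you correctly end with $\psi''(t_1)$, where the paper's final display has the typo $\psi''(t_0)$). One small remark: the hypotheses do not actually give continuity of $\psi$ at $0$, but you do not need it, since the minorant already bounds $\e^{-\psi}$ on $]0,t_1]$ and the single point $t=0$ has measure zero.
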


\begin{proof}
We begin by splitting the integral:
\begin{equation}
\int_0^{+\infty}\exp(-\psi(t))\,\diff t
=\int_0^{t_1}\exp(-\psi(t))\,\diff t
+\int_{t_1}^{+\infty}\exp(-\psi(t))\,\diff t.
\label{eq:psiint-1}
\end{equation}
Since $\psi''>0$, the function $\psi$ is strictly convex, and consequently,
\begin{equation}
\psi(t)\ge \psi(t_1)+(t-t_1)\psi'(t_1), \qquad x\in \R_{\ge0},
\end{equation}
since convex functions have support lines at each point. Moreover, since
$\psi'(t_0)=0$ and $0<t_0<t_1<+\infty$, and $\psi''>0$, we know that 
$\psi'(t_1)>0$.
As regards the second integral on the right-hand side of \eqref{eq:psiint-1}, 
it now follows that
\begin{equation}
\int_{t_1}^{+\infty}\exp(-\psi(t))\,\diff t
\le \int_{t_1}^{+\infty}\exp\big(-\psi(t_1)-(t-t_1)\psi'(t_1)\big)\diff t
=\frac{\e^{-\psi(t_1)}}{\psi'(t_1)}.
\label{eq:psi-est-2}
\end{equation}
Regarding the first integral on the right-hand side of \eqref{eq:psiint-1},
we use the Lagrange form of the remainder in Taylor's formula:
\begin{equation}
\psi(t)=\psi(t_0)+(t-t_0)\psi'(t_0)+\frac12(t-t_0)^2\psi''(\xi)
=\psi(t_0)+\frac12(t-t_0)^2\psi''(\xi),
\end{equation}
 where $\xi$ is an intermediate point between $t_0$ and $t$. Since
 $\psi'''<0$, the second derivative $\psi''$ is strictly decreasing, so that
 \begin{equation}
\psi(t)=\psi(t_0)+\frac12(t-t_0)^2\psi''(\xi)\ge \psi(t_0)+\frac12(t-t_0)^2
\psi''(t_1),\qquad 0<t\le t_1. 
\end{equation}
 It now follows that
 \begin{multline}
 \int_{0}^{t_1}\exp(-\psi(t))\,\diff t\le\int_{0}^{t_1}\exp\bigg(
 -\psi(t_0)-\frac12(t-t_0)^2\psi''(t_1)\bigg)\diff t
 \\
 \le\e^{-\psi(t_0)}\int_{-\infty}^{+\infty}
 \exp\bigg(-\frac12(t-t_0)^2\psi''(t_1)\bigg)\diff t
 = \e^{-\psi(t_0)}\sqrt{\frac{2\pi}{\psi''(t_0)}}.
 \label{eq:psi-est-3}
 \end{multline}
 The claimed estimate is a consequence of the decomposition
 \eqref{eq:psiint-1} and the estimates \eqref{eq:psi-est-2} and 
 \eqref{eq:psi-est-3}, which completes the proof.
\end{proof}

\begin{proof}[Proof of Lemma \ref{lem:basic-est-1}]
If $\psi(t)=\sigma t-\theta t^q$, we have 
$\psi''(t)=\theta q(1-q)t^{q-2}>0$ and $\psi'''(t)=-\theta
q(1-q)(2-q)t^{q-3}<0$,
and the point $t_0$ with $\psi'(t_0)=0$ is given by
\[
t_0=\bigg(\frac{\theta q}{\sigma}\bigg)^{\frac{1}{1-q}},
\] 
so that 
\[
\psi(t_0)=-\theta(1-q)(q\theta/\sigma)^{\frac{q}{1-q}}.
\]
If $t_1>t_0$, then there is a point $\xi$ between $t_0$ and $t_1$
such that
\[
\psi'(t_1)=\psi'(t_1)-\psi'(t_0)=(t_1-t_0)\psi''(\xi)\ge (t_1-t_0)\psi''(t_1).
\]
We choose $t_1=2t_0$, in which case
\[
\psi''(t_1)=\theta q(1-q)t_1^{q-2}=\theta q(1-q)2^{q-2}t_0^{q-2}=
(1-q)2^{q-2}(q\theta)^{-\frac{1}{1-q}}\sigma^{\frac{2-q}{1-q}},
\]
and hence
\[
\psi'(t_1)\ge(t_1-t_0)\psi''(t_1)=t_0\psi''(t_1)=
(1-q)2^{q-2}\sigma\ge\frac14\,(1-q)\sigma.
\]
Since $\psi(t_1)>\psi(t_0)$, it now follows from Lemma \ref{lem:basic-est-2} 
that 
\begin{multline}
\int_0^{+\infty}\exp(\theta t^q-\sigma t)\,\diff t\le 
\e^{-\psi(t_0)}\Bigg(\sqrt{\frac{2\pi}{\psi''(t_1)}}
+\frac{1}{\psi'(t_1)}\Bigg)
\\
\le\frac{1}{\sigma}\bigg(2^{1-\frac{q}{2}}(q\theta)^{\frac{1}{2(1-q)}}\sqrt{\frac{2\pi}{1-q}}\,
\sigma^{-\frac{q}{2(1-q)}}+\frac{4}{1-q}\bigg) 
\exp\Big((1-q)\theta(q\theta/\sigma)^{\frac{q}{1-q}}\Big),
\end{multline}
which amounts to the claimed estimate of the lemma.
\end{proof}


\section{The instance $0<q<\frac12$ connected with analytic non-quasianalyticity}

\subsection{Background on quasianalyticity and analytic quasianalyticity}
Quasianalytic classes were \`a la mode in the 1920s, with the striking
insights of Denjoy \cite{Denj} and Carleman \cite{Carl}.

If a function on the real line is real-analytic, then, if all its Taylor coefficients
vanish at a single point, the function then must vanish identically. 
This uniqueness property extends to more general $C^\infty$-smooth functions 
on the line, provided we have suitable control on the growth the higher order 
derivatives in a (locally) uniform sense, as given by a weight sequence.  
The precise result is known as the
Denjoy-Carleman theorem, in which the following dichotomy appears. 
Either the class is quasianalytic, in the sense that if all the Taylor coefficients
of the function vanish at a point, then the function must vanish identically,
or the class is non-quasianalytic, meaning that the class contains nontrivial
functions with bounded support. 
As a side note, we observe that Paul Cohen found an elementary proof of the 
Denjoy-Carleman theorem \cite{Coh}. Moreover, it might be relevant to mention 
here that when the uniform control is replaced by $L^p$-control, 
an additional dichotomy shows up in the low exponent regime $0<p<1$
\cite{HedWenn}.

The concept of analytic quasianalyticity is analogous. If we have a function
which is $C^\infty$-smooth on the real line, and also possesses a 
(necessarily unique) holomorphic extension to the upper half-plane, we may 
still ask when the vanishing of all the Taylor coefficients at a given real point 
forces the function to vanish everywhere. The natural condition on the (locally)
uniform control on the growth of the higher order derivatives remains in force,
and the question is asked in terms of the weight sequence controlling the 
growth. It is to be expected that the one-sided holomorphic extension property 
means that less stringent growth control is needed to force the function to vanish 
everywhere.
 And when we are in the opposite world of analytic non-quasianalytic classes, that
 means that that there exist nontrivial functions in the class 
 that are flat at a given real point and holomorphic in the  upper half-plane.  What
 should such functions look like? We cannot as in the classical non-quasianalytic 
case just take such a function and replace it by the $0$ function on one side of
the given point, as that would ruin the one-sided holomorphic extension property.
 Typically, if such flat functions exist, there also exist such flat functions with 
 a decay at the given flat point which from the positive imaginary direction is
 like that of an atomic inner function in the inner-outer factorization theory of the 
 upper half-plane. In \cite{Beu}, Beurling studied weighted spaces on semigroups
 where analytic non-quasianalyticity is a natural dividing property after application 
 of the Fourier-Laplace transform.  Dually, we have instead a space of functions
 that are defined and holomorphic in the upper half-plane but need not have 
 well-defined boundary values on the real line pointwise. 
 Instead, the functions meet a growth condition in the upper half-plane. Since 
 basically Fourier analysis on the half-line amounts to expansion of the functions 
 in terms of the singular atomic inner functions at the point at infinity, it becomes
 natural to ask when this carries over to the setting of a finite point in a stable 
 fashion. This stability would require the various atomic inner functions at the given
 point $x_0\in\R$, naturally indexed by a parameter $\alpha\in\R_{\ge0}$, to 
 generate invariant subspaces $I_\alpha(x_0)$ which get smaller as $\alpha$ 
 increases, and the containments should be strict. 
 This issue appeared later in the work of Korenblum \cite{Kor3} (also  in collaboration 
 with Hayman \cite{HayKor}). It was motivated by Korenblum's earlier work 
 \cite{Kor1}, \cite{Kor2} which extended as far as possible the classical Nevanlinna 
 theory as well as the Beurling invariant subspace theorem to the soft topology slow 
 growth spaces $\mathcal{A}^{-\infty}$ consisting 
 of entire functions of finite exponential type in the context of the Poincar\'e model 
 of the hyperbolic plane. What we will need here is basically the property that the limit 
 of the invariant subspaces $I_\alpha(x_0)$ as $\alpha\to+\infty$ is the trivial 
 subspace $\{0\}$, interpreted in the sense of intersection. 
  
\subsection{A boundary decay uniqueness theorem}
The following is a prototypical radial boundary decay uniqueness theorem.

\begin{thm}
Suppose $F$ is a holomorphic function in the open unit disk $\D$, and that $F$ is
bounded. Suppose, moreover, that we have the radial limit
\[
\lim_{r\to 1^-}(1-r)\log|F(r)|=-\infty.
\] 
Then $F$ must vanish identically in $\D$.
\label{thm:NT-1}
\end{thm}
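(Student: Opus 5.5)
The plan is to exploit the radial behaviour together with a Phragmén–Lindelöf / Nevanlinna argument in a region adapted to the point $1$. Without loss of generality normalize $\|F\|_\infty\le1$ and suppose $F\not\equiv0$. First I would pass to the upper half-plane via the Cayley map $z=(w-\imag)/(w+\imag)$, or more conveniently to the right half-plane $\mathbb{H}=\{\Re\zeta>0\}$ via $\zeta=(1+z)/(1-z)$. This sends the radius $[0,1)$ to the half-line $[1,+\infty)$, and $1-r\asymp 1/\zeta$. The hypothesis becomes: $G=F\circ(\text{inverse map})$ is bounded holomorphic on $\mathbb{H}$ and satisfies
\[
\frac{\log|G(t)|}{t}\to-\infty\qquad (t\to+\infty,\ t\in\R).
\]

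The key step is to use the canonical factorization of bounded functions in the half-plane. We write $G=B\,S\,O$, where $B$ is a Blaschke product, $O$ is outer, and $S$ is a singular inner factor. The singular factor may contain an atom at infinity, namely $\e^{-c\zeta}$ with $c\ge0$. By the standard Krein/Ahlfors–Heins theorem for bounded functions, $\log|G(t)|/t\to -c$ along the positive real axis outside a set of finite logarithmic length. More precisely, the limit along the real axis exists and equals the exponential type $-c$, since the other factors contribute $\ordo(t)$: for a bounded function in a half-plane, $\log|G(t)|=-ct+\ordo(t)$ holds for almost all directions, including the axis except on an exceptional set. Here the whole limit is $-\infty$, so even along a sequence $t_n\to\infty$ avoiding the exceptional set we get $-c=-\infty$, which is impossible because $c<\infty$.

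To make this rigorous without invoking the exceptional-set version, I would argue directly with the Poisson–Jensen representation. Write
\[
\log|G(\zeta)|\le -c\,\Re\zeta+P[\log|G^*|](\zeta),
\]
which holds because $\log|B|\le0$. Then I would apply the harmonic majorant inequality on the half-disk of radius $R$ via Carleman's formula. Carleman's formula for the half-plane gives
\[
\int_1^R\Big(\frac1{t^2}-\frac1{R^2}\Big)\log|G(\pm\imag t)|\,\diff t
\]
plus a term $\frac{1}{\pi R}\int_{-\pi/2}^{\pi/2}\log|G(R\e^{\imag\phi})|\cos\phi\,\diff\phi$, and this sum is bounded below. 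Alternatively, a cleaner route avoids the factorization: compose with a conformal map of $\mathbb{H}$ onto a thin sector or strip around the positive axis and use a two-constants / harmonic measure estimate. Choosing a point $t_0$ on the axis, the bound $\log|G|\le -Mt$ on $[t_0,2t_0]$, which has harmonic measure $\gtrsim 1/t_0$ relative to $1$, combined with Harnack inequalities, gives $\log|G(1)|\le -cM$ for arbitrary $M$. This would force $G(1)=0$, and similarly $G$ would vanish everywhere.

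The main obstacle is exactly this harmonic-measure step: a segment of the radius has zero harmonic measure as a subset of the domain, since it is interior rather than boundary. I would therefore use instead the subharmonic function $v=\log|G|$ on the slit domain $\mathbb{H}\setminus[t_0,2t_0]$. There $v\le0$, and $v\le -Mt_0$ on the slit, so $v(1)\le -Mt_0\,\omega(1,[t_0,2t_0])$. The harmonic measure of the slit $[t_0,2t_0]$ seen from $1$ in $\mathbb{H}$ is comparable to $1/t_0$, by scaling and since the Green function of $\mathbb{H}$ at pole $1$ behaves like $\Re(1/\zeta)\asymp 1/t$. This yields $\log|G(1)|\le -c_0M$ with $c_0>0$ independent of $M$. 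Letting $M\to\infty$ gives $G(1)=0$. Applying the same argument at every point of a neighbourhood, with the same scaling, shows that $G$ vanishes identically, and hence $F\equiv0$.
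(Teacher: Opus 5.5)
Your slit-domain argument in the last paragraph is a correct proof, and it takes a different route from the paper. The paper only appeals to canonical factorization $F=BSO$:
\begin{itemize}
\item the outer factor, and the part of the singular measure away from $1$, contribute $\ordo(1/(1-r))$ to $\log|F(r)|$;
\item an atom of mass $\alpha$ at $1$ contributes $-2\alpha/(1-r)$;
\item the Blaschke product satisfies $\limsup_{r\to1^-}(1-r)\log|B(r)|=0$.
\end{itemize}
Together these give $\limsup_{r\to1^-}(1-r)\log|F(r)|=-2\alpha>-\infty$ unless $F\equiv0$. The real work on that route is the lower bound for $|B|$ along some radii, since zeros may sit on the radius.

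You avoid factorization and Blaschke estimates altogether. You use only three things: $v=\log|G|\le0$ is subharmonic, the maximum principle holds in $\mathbb{H}\setminus[t_0,2t_0]$, and harmonic measure is invariant under dilation, so $\omega(1,[t_0,2t_0],\mathbb{H}\setminus[t_0,2t_0])=\omega(1/t_0,[1,2],\mathbb{H}\setminus[1,2])\ge c_0/t_0$. This is more elementary and gives a stronger, quantitative statement. A single block on which $\log|G|\le-Mt$ already forces $\log|G(1)|\le-c_0M$, so the decay is only needed on a sequence of blocks $[t_n,2t_n]$ with $M_n\to\infty$. The factorization route gives instead the exact value of the $\limsup$ and identifies the extremal functions $\exp(-\alpha\frac{1+z}{1-z})$.

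When you write it up, make three points explicit.
\begin{itemize}
\item On the slit, $\limsup v\le v(s)\le-Mt_0$ by upper semicontinuity, since the slit lies inside $\mathbb{H}$. At $\infty$, use the extended maximum principle: $v+Mt_0\,\omega$ is bounded above and $\{\infty\}$ is polar.
\item The bound $\omega(\epsilon,[1,2],\mathbb{H}\setminus[1,2])\ge c_0\epsilon$ must hold uniformly for $0<\epsilon\le\frac12$. One way is odd reflection across $\imag\R$ followed by Hopf's lemma at $0$.
\item To pass from $G(1)=0$ to $G\equiv0$, note that $G(s\,\cdot)$ satisfies the same hypothesis for every $s>0$. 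Hence $G$ vanishes on $\R_{>0}$.
\end{itemize}

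Drop the Poisson--Jensen/Carleman paragraph. The inequality $\log|G|\le-c\Re\zeta+P[\log|G^*|]$ is an upper bound, and no upper bound can contradict $\log|G(t)|/t\to-\infty$. What a factorization route actually needs is a lower bound along a sequence, which is precisely the Blaschke statement above. Citing the Ahlfors--Heins theorem is correct but much heavier than the result being proved.
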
  

This theorem is an immediate consequence of the classical Nevanlinna factorization 
theory, and we refrain from supplying a proof. We could mention, however, that for 
Blaschke products 
\[
B(z)=z^N \prod_j\frac{|a_j|}{a_j}\frac{a_j-z}{1-\bar a_jz},
\]
where $N\in\Z_{\ge0}$ and the sequence $\{a_j\}_j$ from $\D$ meets the Blaschke 
condition
\[
\sum_j (1-|z_j|)<+\infty,
\]
it is not hard to establish that
\[
\limsup_{r\to 1^-}(1-r)\log|B(r)|=0,
\]
so that the only source of a nontrivial limit in this expression could only be the nonzero
factors (outer and singular inner). As it happens, only the singular (in fact atomic) inner
part can contribute. 
In fact,  the examples of atomic inner functions
\[
F(z)=\exp\bigg(-\alpha\frac{1+z}{1-z}\bigg)
\]
for $\alpha\ge0$ show that any finite limit
\[
\lim_{r\to1^-}(1-r)\log|F(r)|=-2\alpha
\]
can be achieved for a nontrivial bounded analytic function. Note that we can use
a conformal transformation (or, more generally, a holomorphic transformation), to
achieve a weakening of the assumptions of the theorem. This idea is one of the basic 
ingredients which helped Korenblum to develop his non-Nevanlinna theory in
\cite{Kor1}, \cite{Kor2}. Here, the more precise setting is in part taken from the paper of 
Hayman and Korenblum \cite{HayKor0}, \cite{HayKor}. 

\begin{thm}
Suppose $F$ is a holomorphic function in the open unit disk $\D$, and that there
exists a holomorphic mapping $\phi:\D\to\calU$, where $\calU\subset\D$ is a
subdomain, and a nontrivial bounded holomorphic function $G$ in $\calU$ such
that the product $FG$ is bounded on $\calU$ as well. The mapping $\phi$ is assumed
real-valued on $\D\cap\R$ and strictly increasing on some interval $[r_0,1[$ with 
$0<r_0<1$ and $\lim \varphi(r)=1$ as $r\to1^-$, while
\[
1-\phi(r)\le C_0(1-r),\qquad r_0\le r<1,
\]  
holds for some positive constant $C_0$. Suppose, moreover, that we have the radial 
limit
\[
\lim_{r\to 1^-}(1-r)\log|F(r)|=-\infty.
\] 
Then $F$ must vanish identically in $\D$.
\label{thm:NT-2}
\end{thm}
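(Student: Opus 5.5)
The idea is to transplant the problem to a bounded holomorphic function on $\D$ and then apply Theorem \ref{thm:NT-1}. Define
\[
H(z):=F(\phi(z))\,G(\phi(z)),\qquad z\in\D.
\]
This is holomorphic on $\D$, because $\phi$ maps $\D$ into $\calU$ and $F$, $G$ are holomorphic there. It is also bounded, since $FG$ is bounded on $\calU$.

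The main step is to check the radial decay $(1-r)\log|H(r)|\to-\infty$. For $r_0\le r<1$ the point $\rho=\phi(r)$ is real, lies in $[\phi(r_0),1[$, and increases to $1$ as $r\to1^-$. Split
\[
\log|H(r)|=\log|F(\phi(r))|+\log|G(\phi(r))|,
\]
allowing the value $-\infty$ at zeros. Let $\|G\|_\infty$ denote the supremum of $|G|$ on $\calU$. The second term is at most $\log\|G\|_\infty$, so multiplied by $(1-r)$ its $\limsup$ is at most $0$.

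For the first term, use the hypothesis along the path $\rho=\phi(r)\to1^-$: we have $(1-\rho)\log|F(\rho)|\to-\infty$. In particular $\log|F(\phi(r))|<0$ for $r$ close to $1$. There the inequality $1-\phi(r)\le C_0(1-r)$, that is $1-r\ge(1-\phi(r))/C_0$, gives
\[
(1-r)\log|F(\phi(r))|\le \frac{1}{C_0}\,(1-\phi(r))\log|F(\phi(r))|\longrightarrow-\infty .
\]
The direction of this inequality is the one point to watch: it works only because the logarithm is eventually negative. Combining the two terms, $(1-r)\log|H(r)|\to-\infty$.

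Theorem \ref{thm:NT-1} now gives $H\equiv0$ on $\D$. Since $\phi$ is strictly increasing on $[r_0,1[$, it is nonconstant, so $\phi(\D)$ is a nonempty open subset of $\calU$, and $FG=0$ there. The domain $\calU$ is connected and $G$ is nontrivial, so the zeros of $G$ are isolated. Hence $F$ vanishes on an open set, and by the uniqueness theorem $F\equiv0$ in $\D$.
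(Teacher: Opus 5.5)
Your proposal is correct and follows the paper's proof. You compose with $\phi$, move the radial decay from $F$ to $(FG)\circ\phi$ using $1-\phi(r)\le C_0(1-r)$ and the boundedness of $G$, and then apply Theorem \ref{thm:NT-1} and the uniqueness theorem. You also make explicit two points the paper leaves implicit: the inequality direction needs $\log|F(\phi(r))|$ to be eventually negative, and the $G$ factor contributes at most $(1-r)\log\|G\|_\infty\to 0$.
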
  

\begin{proof}
In view of the given decay assumption, we know that
\begin{equation}
\lim_{r\to 1^-}(1-\phi(r))\log|F\circ\phi(r)|=-\infty.
\end{equation}
Moreover, by the inequality $1-\phi(r)\le C_0(1-r)$, it follows from this that
\begin{equation}
\lim_{r\to 1^-}(1-r)\log|F\circ\phi(r)|=-\infty,
\end{equation}
and consequently that
\begin{equation}
\lim_{r\to 1^-}(1-r)\log|(FG)\circ\phi(r)|=-\infty
\end{equation}
as well.
 But now we may apply Theorem \ref{thm:NT-1} to the bounded analytic 
function $(FG)\circ\phi$ in place of $F$ and conclude that $(FG)\circ\phi=0$ identically. 
But then $FG=0$ on $\calU$ and hence $F=0$ identically.
\end{proof}

\subsection{Application to the setting of the right half-plane and the proof of
Theorem \ref{thm:main-2}}
\label{ss:Korenblum-1}
We need to apply Theorem \ref{thm:NT-2} in the setting of the right-half plane
rather than the unit disk. In principle, we can use the standard Cayley mapping 
to obtain the required statement from Theorem \ref{thm:NT-2}.

\begin{cor}
Suppose $F$ is a holomorphic function in the right half-plane $\C_{\Re>0}$,
and that there exists a holomorphic mapping $\phi:\D\to\calU$, where 
$\calU\subset\C_{\Re>0}$ is a subdomain, and a nontrivial bounded holomorphic
function $G$ on $\calU$ such that $FG$ is bounded on $\calU$ as well.
The mapping $\phi$ is real-valued on $\D\cap\R$ and strictly decreasing on some 
interval $]0,x_0[$ with $0<x_0<1$ and $\lim \varphi(r)=0$ as $r\to1^-$, while
\[
1-\phi(x)\le C_0 x,\qquad 0<x\le x_0,
\]  
holds for some positive constant $C_0$. 
 Suppose, moreover, that we have the radial limit
\[
\lim_{x\to 0^+} x\log|F(x)|=-\infty.
\] 
Then $F$ must vanish identically in $\C_{\Re>0}$.
\label{cor:NT-3}
\end{cor}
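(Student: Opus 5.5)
The plan is to transport everything from the right half-plane to the unit disk by a Cayley map, and then invoke Theorem \ref{thm:NT-2} directly. Let $\kappa(z)=(1-z)/(1+z)$. It maps $\D$ conformally onto $\C_{\Re>0}$, is real and strictly decreasing on $]-1,1[$, and sends $r\to1^-$ to $x=\kappa(r)\to0^+$. Its inverse is $\kappa^{-1}(w)=(1-w)/(1+w)$. I will read the hypothesis on $\phi$ in the only way consistent with the boundary point $0$ of the half-plane: $\phi:\D\to\calU$ is real on $\D\cap\R$, $\phi(r)$ decreases strictly to $0$ as $r\to1^-$, and $\phi(r)\le C_0(1-r)$ near $r=1$. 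This is the half-plane analogue of $1-\phi(r)\le C_0(1-r)$.

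I then define the transported objects
\[
\tilde F:=F\circ\kappa \ \text{on }\D,\quad \tilde\calU:=\kappa^{-1}(\calU)\subset\D,\quad \tilde G:=G\circ\kappa\ \text{on }\tilde\calU,\quad \tilde\phi:=\kappa^{-1}\circ\phi:\D\to\tilde\calU .
\]
With these, $\tilde G$ is bounded and nontrivial, and $\tilde F\tilde G=(FG)\circ\kappa$ is bounded on $\tilde\calU$. The map $\tilde\phi$ is real on $\D\cap\R$. It is strictly increasing near $1$ with limit $1$, because $\kappa^{-1}$ is decreasing and $\phi$ decreases to $0$. It also satisfies
\[
1-\tilde\phi(r)=\frac{2\phi(r)}{1+\phi(r)}\le 2\phi(r)\le 2C_0(1-r),
\]
so the comparison hypothesis of Theorem \ref{thm:NT-2} holds with constant $2C_0$.

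It remains to check the radial decay. With $x=\kappa(r)=(1-r)/(1+r)$ we have $1-r=(1+r)x$, and hence
\[
(1-r)\log|\tilde F(r)|=(1+r)\,x\log|F(x)|.
\]
Since $x\log|F(x)|\to-\infty$ as $x\to0^+$, the quantity $x\log|F(x)|$ is eventually negative, so multiplying by the factor $1+r\in[1,2]$ still gives $-\infty$ in the limit. Theorem \ref{thm:NT-2} therefore yields $\tilde F\equiv0$ on $\D$, and hence $F\equiv0$ on $\C_{\Re>0}$.

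The main obstacle is not analytic but interpretive. As written, the statement has $\phi$ decreasing on $]0,x_0[$ while also taking $r\to1^-$, and it imposes $1-\phi(x)\le C_0x$. These conditions must be read correctly as the half-plane version of the disk hypotheses, namely $\phi(r)\downarrow0$ with $\phi(r)=\Ordo(1-r)$. Once that reading is fixed, every step is a one-line computation with $\kappa$. The only point needing care is the sign argument above, which lets the bounded factor $1+r$ pass through the limit.
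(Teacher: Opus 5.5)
This is correct and is exactly the route the paper has in mind: the paper writes out no proof, only the remark that the Cayley map reduces the corollary to Theorem \ref{thm:NT-2}, and your computations $1-\tilde\phi(r)=2\phi(r)/(1+\phi(r))\le 2C_0(1-r)$ and $(1-r)\log|\tilde F(r)|=(1+r)\,x\log|F(x)|$ supply the omitted details. Your repair of the garbled hypothesis to $\phi(r)\downarrow 0$ with $\phi(r)\le C_0(1-r)$ as $r\to1^-$ matches the later use of the corollary with a conformal map satisfying $\phi(1)=0$, and it is genuinely needed: under the literal hypotheses, $\phi(z)=\sqrt{(1-z)/(1+z)}$, $\calU=\{|\arg\zeta|<\pi/4\}$, $G\equiv1$ and $F(\zeta)=\exp(-1/\zeta^2)$ would be a counterexample.
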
  

\begin{proof}[Proof of Theorem \ref{thm:main-2}]
As in the proof of Theorem \ref{thm:main-1-0}, we may assume that 
$u=u_{\rm{horz}}=\Rop[f,0]$ is a unilateral horizontal wave solution,
with $f(x)=u(x,0)$ for $x\in\R_{\ge0}$, so that $f$ is continuous and
$f(0)=0$. The given growth control says that
\[
|u(x,y)|\le C(y)\,\exp(\theta\,x^q),\qquad (x,y)\in\R_{\ge0}\times\calY,
\]
where $C(y)>0$ may depend arbitrarily on $y\in\calY$. Since $0<q<\frac12$
is assumed,
the Laplace transform of $u(\cdot,y)$ is holomorphic in the right half-plane
$\C_{\Re>0}$ for $y\in\calY$, and, in view of Lemma \ref{lem:basic-est-1},
we have the estimate, for $\sigma=\Re\zeta>0$ and $y\in\calY$:
\begin{multline}
|\Lop[u(\cdot,y)](\zeta)|\le \int_0^{+\infty}|u(x,y)|\,\e^{-x\Re\zeta}\diff x
\le C(y)\int_0^{+\infty}\exp(\theta x^q-\sigma x)\diff x
\\
\le C(y)\,\frac{1}{\sigma}\big(A(q,\theta)\sigma^{-\frac{q}{2(1-q)}}+B_q\big)
\exp\big(D(q,\theta)\,\sigma^{-\frac{q}{1-q}}\big),
\label{eq:LG-1}
\end{multline}
where the constants are as in the lemma. Next, in view of \eqref{eq:Ropf0-2},
we know that
\[
\Lop[u(\cdot,y)](\zeta)=\e^{-y/\zeta}\Lop[f](\zeta),
\]
so that \eqref{eq:LG-1} entails that
\begin{multline}
|\Lop[f](\zeta)|\le 
C(y)\,\frac{1}{\sigma}\big(A(q,\theta)\sigma^{-\frac{q}{2(1-q)}}+B_q\big)
\\
\times\exp\big(D(q,\theta)\,\sigma^{-\frac{q}{1-q}}+y\Re(1/\zeta)\big),
\qquad y\in\calY.
\label{eq:LG-2}
\end{multline}
It now follows that for $\zeta=\sigma>0$, we have that
\begin{multline}
\sigma\log|\Lop[f](\sigma)|\le 
\sigma\log\frac{C(y)}{\sigma}
\\
+\sigma\log
\big(A(q,\theta)\sigma^{-\frac{q}{2(1-q)}}+B_q\big)
+D(q,\theta)\,\sigma^{\frac{1-2q}{1-q}}+y,\qquad
y\in\calY.
\label{eq:LG-2.1}
\end{multline}
We let $\sigma\to 0^+$, and observe that since $0<q<\frac12$, 
it follows from \eqref{eq:LG-2.1} that
\begin{equation}
\limsup_{\sigma\to0^+}\sigma\log|\Lop[f](\sigma)|\le y,\qquad
y\in\calY,
\label{eq:LG-3}
\end{equation}
and since $y\in\calY$ is arbitrary, and $\inf\calY=-\infty$ holds, it
then follows that 
\begin{equation}
\lim_{\sigma\to0^+}\sigma\log|\Lop[f](\sigma)|=-\infty.
\label{eq:LG-4}
\end{equation}
The construction of the domain $\calU$ and associated mapping 
$\phi:\D\to\calU$ is somewhat complicated but
carried out by Hayman and Korenblum in \cite{HayKor0}, \cite{HayKor}
in the context of the condition
\begin{equation}
\int_0^1\sqrt{\frac{\kappa(t)}{t}}\diff t<+\infty,
\label{eq:ANQA-1}
\end{equation}
where the function $\kappa$ measures the growth. In our situation, 
$\kappa$ is, if we neglect logarithmic contributions,
\[
\kappa (t)=D(q,\theta)\,t^{-\frac{q}{1-q}},
\]
and since $0<q<\frac12$, the condition \eqref{eq:ANQA-1} is 
automatically met. By the work of Hayman and Korenblum, then, we are
then in the setting of Corollary \ref{cor:NT-3}, and conclude that $F=\Lop[f]$
must vanish identically, and hence $f=0$ almost everywhere on $\R_{\ge0}$,
so that $u=\Rop[f,0]=\Rop[0,0]=0$ in the quarter-plane 
$\R_{\ge0}\times\R_{\le0}$, as claimed.
\end{proof}

\begin{rem}
The condition \eqref{eq:ANQA-1} is understood as a version of the criterion
for ananlytic non-quasianalyticity.
\end{rem}

\section{Uniqueness in the skewed exponential growth case $\frac12<q<1$}
\label{sec:skewed-1}
\subsection{Skewed exponential growth}
As our starting point, we have a solution $u$ to the Klein-Gordon equation 
$u''_{xy}+u=0$ in the spacelike quarter-plane $\R_{>0}\times\R_{<0}$ which
extends continuously to the closed quarter-plane $\R_{\ge0}\times\R_{\le0}$,
with vanishing vertical boundary values:
\[
u(0,y)=0,\qquad y\in\R_{\le0}.
\]
As explained in the proof of Theorem \ref{thm:main-1-0}, $u=\Rop[f,0]$ if the
continuous function $f$ is given by $f(x)=u(x,0)$. 
We assume that $\frac12<q<1$, and that
\begin{equation}
|u(x,y)|\le C_0 \exp(\theta_1 |x|^q+\theta_2|y|^{q''}),\qquad 
(x,y)\in\R_{\ge0}\times\calY,
\label{eq:skewed-1}
\end{equation}
holds for some positive constants $C_0$, and $\theta_1,\theta_2$, where
$q''=q/(2q-1)$. Here, $\calY$ is a subset of the half-line $\R_{\le0}$ 
with the properties prescribed by Theorem \ref{thm:main-1-2}.
We apply the Laplace transform with respect to the $x$ variable:
\[
\Lop[u(\cdot,y)](\zeta)=\int_0^{+\infty}u(x,y)\,\e^{-x\zeta}\diff x,\qquad \Re \zeta>0,
\]
which defines a holomorphic function in the indicated half-plane for each 
$y\in\calY$. In view of \eqref{eq:Ropf0-2},
\[
\Lop[u(\cdot,y)](\zeta)=\e^{-y/\zeta}\Lop[f](\zeta),
\]
and by the estimate \eqref{eq:skewed-1}, we have, for $\sigma=\Re\zeta>0$,
\begin{multline}
|\e^{-y/\zeta}\Lop[f](\zeta)|=\Lop[u(\cdot,y)](\zeta)|\le 
\int_0^{+\infty}|u(x,y)|\,\e^{-\sigma x}\diff x
\\
\le C_0\exp(\theta_2|y|^{q''})\int_0^{+\infty}\exp(\theta_1 x^q-\sigma x)\,\diff x,
\qquad y\in\calY,
\end{multline}
so that 
\begin{equation}
|\Lop[f](\zeta)|
\le C_0\exp\big(\theta_2|y|^{q''}+\Re(y/\zeta)\big)
\int_0^{+\infty}\exp(\theta_1 x^q-\sigma x)\,\diff x,\qquad y\in\calY,
\end{equation}
where again we write $\sigma=\Re\zeta>0$. Here, we may optimize over 
$y\in\calY$, which yields
\begin{equation}
|\Lop[f](\zeta)|
\le C_0\exp\Big(\inf_{y\in\calY}\big\{\theta_2|y|^{q''}+\Re(y/\zeta)\big\}\Big)
\int_0^{+\infty}\exp(\theta_1 x^q-\sigma x)\,\diff x.
\label{eq:optimum-1}
\end{equation}
 If we write
\[
\psi(y)=\theta_2|y|^{q''}-|y|\,\Re(1/\zeta),
\]
we may first minimize over all $y\in\R_{\le0}$ and then hope that the 
minimum over $y\in\calY$ is not too much bigger. 
The minimum over $y\in\R_{\le0}$ is attained at the point 
\begin{equation}
y_\star(\zeta,\theta_2)=-\bigg(\frac{\Re(1/\zeta)}{\theta_2 q''}\bigg)^{\frac{1}{q''-1}},
\label{eq:ystar-min-1}
\end{equation}
where we suppress indication of the variable $q$ as it is kept fixed, and then
\[
\psi(y_\star(\zeta,\theta_2))=-E(q,\theta_2)(\Re(1/\zeta))^{\frac{q}{1-q}}, 
\]
where
\[
E(q,\theta_2):=\bigg(\frac1{q}-1\bigg)(q''\theta_2)^{1-\frac{q}{1-q}}. 
\]
At some other point $y\in\R_{\le0}$, we may apply a version of 
Taylor's formula:
\begin{multline}
\psi(y)=\psi(y_\star(\zeta,\theta_2))+(y-y_\star(\zeta,\theta_2))
\psi'(y_\star(\zeta,\theta_2))+\frac12(y-y_\star(\zeta,\theta_2)^2\psi''(\xi)
\\
=\psi(y_\star(\zeta,\theta_2))+\frac12(y-y_\star(\zeta,\theta_2))^2\psi''(\xi),
\end{multline}
since $\psi(y_\star(\zeta,\theta_2))=0$ must hold at the minimum. Here, 
$\xi$ stands for an intermediate point between $y_\star(\zeta,\theta_2)$ and $y$. 
Since 
\[
\psi''(y)=\theta_2 q''(q''-1)|y|^{q''-2},
\]
it now follows that
\begin{multline}
\psi(y)=\psi(y_\star(\zeta,\theta_2))+\frac12(y-y_\star(\zeta,\theta_2))^2\psi''(\xi)
\\
=-E(q,\theta_2)(\Re(1/\zeta))^{\frac{q}{1-q}}+
\frac12(y-y_\star(\zeta,\theta_2))^2\theta_2 q''(q''-1)|\xi|^{q''-2}.
\label{eq:psiexp-1}
\end{multline}

\begin{lem}
Suppose $\calY\subset\R_{\le0}$ and the point $y_1\in\R_{\le0}$ are as in the 
formulation of Theorem \ref{thm:main-1-2}.
If $y_2:=\min\{-1,y_1,-(2M)^{2/q''}\}$, and if $y_\star< (M+1)y_2$,
then there exists a point $y\in\calY$ with $y<y_2$ such that 
\[
y-M|y|^{1-\frac{q''}{2}}\le y_\star\le y+M|y|^{1-\frac{q''}{2}},
\]
and hence 
\[
2y_\star\le y\le \frac{2}{3}y_\star.
\]
\label{lem:ystar-1}
\end{lem}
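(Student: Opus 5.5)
The plan is to read everything off the definition of an asymptotic $q$-covering; no analysis beyond elementary inequalities should be needed. First, since $M>0$ and $y_2\le y_1<0$, the hypothesis gives $y_\star<(M+1)y_2\le y_2\le y_1$. So $y_\star\in\,]-\infty,y_1]$, and the covering property yields some $y\in\calY$ with
\[
|y_\star-y|\le M\,R(y,q).
\]

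The one point needing care is that this $y$ might a priori be small, where $R(y,q)=|y|$ rather than $|y|^{1-\frac{q''}{2}}$. To rule this out I will use only the trivial bound $R(y,q)\le|y|$, valid for every $y$. It gives
\[
|y_\star|\le|y|+M|y|=(M+1)|y|,
\]
and hence
\[
|y|\ge\frac{|y_\star|}{M+1}>|y_2|\ge1 .
\]
Since $y\le0$, this means $y<y_2$. Moreover, since $|y|>1$, we now have $R(y,q)=|y|^{1-\frac{q''}{2}}$. The covering inequality then reads exactly as the first claimed double inequality
\[
y-M|y|^{1-\frac{q''}{2}}\le y_\star\le y+M|y|^{1-\frac{q''}{2}}.
\]

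For the second claim, I write
\[
M|y|^{1-\frac{q''}{2}}=|y|\cdot M|y|^{-\frac{q''}{2}}.
\]
Because $|y|>|y_2|\ge(2M)^{2/q''}$, we get $|y|^{\frac{q''}{2}}\ge 2M$, so $M|y|^{1-\frac{q''}{2}}\le\frac12|y|$. Therefore $|y_\star-y|\le\frac12|y|$, which gives
\[
\tfrac12|y|\le|y_\star|\le\tfrac32|y|.
\]
Translating back to negative numbers: $|y|\le2|y_\star|$ gives $y\ge2y_\star$, and $|y|\ge\frac23|y_\star|$ gives $y\le\frac23y_\star$. This is the claimed bound $2y_\star\le y\le\frac23 y_\star$.

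I expect the main (mild) obstacle to be the first step: making sure the covering point $y$ lies in the regime $|y|\ge1$, where $R$ has the power form. The crude bound $R(y,q)\le|y|$, combined with the factor $M+1$ built into the hypothesis on $y_\star$, is designed precisely for this. After that, the choice $y_2\le-(2M)^{2/q''}$ makes the final comparison immediate.
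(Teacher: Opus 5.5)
Your proof is correct and follows essentially the same route as the paper. In both, the crude bound $R(y,q)\le|y|$ together with the factor $M+1$ forces the covering point below $y_2$, and then $|y|^{q''/2}\ge 2M$ yields $\tfrac32 y\le y_\star\le\tfrac12 y$. The paper phrases the first step as excluding points of $\calY\cap[y_2,0]$, whose covering intervals stay at or to the right of $(M+1)y_2$, whereas you argue directly via $|y|\ge|y_\star|/(M+1)>|y_2|$, which also makes explicit the check $y_\star\le y_1$ that the paper leaves implicit.
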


\begin{proof}
If $y\in\calY$ with $y_2\le y\le0$, we use the inequality $0\le R(y)\le |y|$ to show that
we have the containment of intervals
\[
[y-MR(y),y+MR(y)]\subset[(M+1)y_2, (1-M)y_2],
\]
so that in view of the assumed inequality $y_\star<(M+1)y_2$, it follows that
\[
y_\star\notin [y-MR(y),y+MR(y)],\qquad y\in\calY,\,\,\,y_2\le y\le0,
\]
so that in view of the asymptotic $q$-covering property, there must instead exist a
$y\in\calY$ with $y<y_2$ such that 
\[
y_\star\in [y-MR(y),y+MR(y)],
\]
and since $y<y_2\le-1$, we know that $R(y)=|y|^{1-\frac{q''}{2}}$, and hence
\begin{equation}
y(1+M|y|^{-\frac{q''}{2}})=y-M|y|^{1-\frac{q''}{2}}\le y_\star\le y+M|y|^{1-\frac{q''}{2}}
=y(1-M|y|^{-\frac{q''}{2}}).
\label{eq:interval-1}
\end{equation}
Moreover, since $y<y_2\le -(2M)^{2/q''}$, it follows that $|y|^{-q''/2}<(2M)^{-1}$, 
and hence, by \eqref{eq:interval-1},
\begin{equation}
\frac{3}{2}y\le y_\star\le \frac{1}{2}y,
\label{eq:interval-2}
\end{equation}
which is equivalent to
\begin{equation}
2y_\star\le y\le \frac{2}{3}y_\star,
\label{eq:interval-3}
\end{equation}
as claimed. 
\end{proof}

\begin{lem}
Suppose $y,y_\star\in\R_{<0}$ are such that $y\le-1$ and 
\[
y-M|y|^{1-\frac{q''}{2}}\le y_\star\le y+M|y|^{1-\frac{q''}{2}},
\]
and 
\[
2y_\star\le y\le \frac{2}{3}y_\star.
\]
Then, if $\xi$ is an intermediate point between $y_\star$ and $y$, it
follows that
\[
0\le (y-y_\star)^2|\xi|^{q''-2}\le 2^{|q''-2|} M^2.
\]
\label{lem:ystar-2}
\end{lem}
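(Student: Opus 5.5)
The estimate splits into a bound on $(y-y_\star)^2$ and a bound on $|\xi|^{q''-2}$, and we multiply them at the end. Nonnegativity of $(y-y_\star)^2|\xi|^{q''-2}$ is trivial, so only the upper bound needs work.

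\emph{Bound on $(y-y_\star)^2$.} The first hypothesis gives $|y-y_\star|\le M|y|^{1-\frac{q''}{2}}$. Squaring,
\[
(y-y_\star)^2\le M^2|y|^{2-q''}.
\]

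\emph{Bound on $|\xi|^{q''-2}$ via comparability.} Since $\xi$ lies between $y_\star$ and $y$, and both are negative, $|\xi|$ lies between $|y_\star|$ and $|y|$. The second hypothesis, $2y_\star\le y\le\frac23 y_\star$, is equivalent to
\[
\tfrac32|y|\le |y_\star|\le 2|y|
\]
(taking absolute values reverses the inequalities). Hence $|\xi|$ lies in $[|y|,2|y|]$, so $|y|\le|\xi|\le 2|y|$. In particular $|\xi|/|y|\in[1,2]$.

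\emph{Combining, with a sign split on $q''-2$.} We write
\[
(y-y_\star)^2|\xi|^{q''-2}\le M^2\,|y|^{2-q''}|\xi|^{q''-2}=M^2\Big(\frac{|\xi|}{|y|}\Big)^{q''-2}.
\]
The last factor depends on the sign of the exponent.
\begin{itemize}
\item If $q''\ge2$, the factor is increasing in $|\xi|/|y|$, so it is at most $2^{q''-2}$.
\item If $q''<2$, it is decreasing, so it is at most $1^{q''-2}=1\le 2^{|q''-2|}$.
\end{itemize}
In both cases we get $(y-y_\star)^2|\xi|^{q''-2}\le 2^{|q''-2|}M^2$, as claimed.

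The hypothesis $y\le-1$ is not used in this computation; in context it ensures that $R(y)=|y|^{1-q''/2}$. The only point needing care is converting the inequalities between negative numbers correctly when passing to absolute values.
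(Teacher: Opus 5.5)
\textbf{There is a genuine error in your comparability step, although the lemma's conclusion is still correct.} Your overall strategy matches the paper's. You bound $(y-y_\star)^2\le M^2|y|^{2-q''}$ from the first hypothesis, bound $|\xi|^{q''-2}\le 2^{|q''-2|}|y|^{q''-2}$ using comparability of $y$ and $y_\star$, and multiply.

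The comparability step is where it goes wrong. For negative numbers, $2y_\star\le y$ means $2|y_\star|\ge|y|$, and $y\le\frac23y_\star$ means $|y|\ge\frac23|y_\star|$. So the hypothesis is equivalent to $\frac12|y|\le|y_\star|\le\frac32|y|$, not $\frac32|y|\le|y_\star|\le 2|y|$. For example, $y=-4$, $y_\star=-3$ satisfies the hypothesis but not your range. Hence $|\xi|/|y|\in[\frac12,\frac32]$, and $|\xi|$ may be smaller than $|y|$.

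This breaks your case $q''<2$, which does occur: $q''=q/(2q-1)<2$ exactly when $q>\frac23$. There the factor $(|\xi|/|y|)^{q''-2}$ is not bounded by $1$; it can approach $2^{2-q''}$. Concretely, take $y=-1$, $M=\frac12$, $y_\star=-\frac12$, which satisfy all the hypotheses of the lemma. For $\xi$ close to $y_\star$ one gets
\[
(y-y_\star)^2|\xi|^{q''-2}\to 2^{-q''}>\tfrac14=M^2 .
\]
So the bound $M^2$ that your argument produces in that case is false.

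The repair is immediate. With $t=|\xi|/|y|\in[\frac12,\frac32]$:
\begin{itemize}
\item if $q''\ge 2$, then $t^{q''-2}\le(\frac32)^{q''-2}\le 2^{q''-2}$;
\item if $q''<2$, then $t^{q''-2}\le(\frac12)^{q''-2}=2^{2-q''}$.
\end{itemize}
In both cases the bound is $2^{|q''-2|}$. The absolute value in the exponent is there precisely because $|\xi|$ can drop to $\frac12|y|$, and the example above shows this constant is attained in the limit.
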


\begin{proof}
Since
\[
(y-y_\star)^2\le M^2|y|^{2-q''},
\]
and, in addition, we check that
\[
|\xi|^{q''-2}\le 2^{|q''-2|}|y|^{q''-2},
\]
so that, consequently,
\[
0\le(y-y_\star)^2|\xi|^{q''-2}\le 2^{|q''-2|}M^2,
\]
as claimed.
\end{proof}

\begin{proof}[Proof of Theorem \ref{thm:main-1-2}]
We let $y_\star(\zeta,\theta_2)$ be given by \eqref{eq:ystar-min-1}, and assume
that $y_\star(\zeta,\theta_2)<(M+1)y_2$, where $y_2$ is as in Lemma \ref{lem:ystar-1}.
Then, according to Lemma \ref{lem:ystar-1}, there exists a point $y\in\calY$ with  $y<y_2$
such that in accordance with Lemma \ref{lem:ystar-2} combined with equation 
\eqref{eq:psiexp-1}, we have
\begin{equation}
\psi(y)\le-E(q,\theta_2)(\Re(1/\zeta))^{\frac{q}{1-q}}+
2^{|q''-2|-1}\theta_2 q''(q''-1)M^2.
\end{equation}
As we insert this point into the estimate \eqref{eq:optimum-1}, we find that
\begin{equation}
|\Lop[f](\zeta)|
\le C_1\exp\Big(-E(q,\theta_2)(\Re(1/\zeta))^{\frac{q}{1-q}}
\Big)
\int_0^{+\infty}\exp(\theta_1 x^q-\sigma x)\,\diff x,\qquad \zeta\in \Omega_2,
\label{eq:optimum-2}
\end{equation}
where
\[
C_1:=C_0\exp\big(2^{|q''-2|-1}\theta_2 q''(q''-1)M^2\big),
\]
and $\Omega_2$ stands for the domain induced by the requirement that
$y_\star(\zeta,\theta_2)<y_2$, that is,
\begin{equation}
\Omega_2:=\Big\{z\in\C_{\Re>0}:\,\,\Re(1/\zeta)>\theta_2 q''
(M+1)^{q''-1}|y_2|^{q''-1}\Big\},
\end{equation}
which is a horocyclic disk tangential to the imaginary line at the origin.
We may now apply Lemma \ref{lem:basic-est-1} to estimate the integral in 
\eqref{eq:optimum-2}, which gives, for $\sigma\in\R_{>0}\cap\Omega_2$,
\begin{equation}
|\Lop[f](\sigma)|
\le
\frac{C_1}{\sigma}\big(A(q,\theta_1) \sigma^{-\frac{q}{2(1-q)}}+B(q)\big)
\exp\Big((D(q,\theta_1)-E(q,\theta_2))\,\sigma^{-\frac{q}{1-q}}\Big).
\label{eq:skewed-2}
\end{equation}
while more generally, we have 
\begin{multline}
|\Lop[f](\zeta)|
\le
\frac{C_1}{\sigma}\big(A(q,\theta_1) \sigma^{-\frac{q}{2(1-q)}}+B(q)\big)
\\
\times\exp\Big((D(q,\theta_1))\,\sigma^{-\frac{q}{1-q}}-E(q,\theta_2)
(\Re(1/\zeta))^{\frac{q}{1-q}}\Big),
\label{eq:skewed-3}
\end{multline}
for $\zeta\in\Omega_2$. Since
\[
\Re\frac{1}{\zeta}=\frac{\Re\zeta}{|\zeta|^2}=\frac{\sigma}{|\zeta|^2}
=\frac{1}{\sigma}\,\bigg(\frac{\sigma}{|\zeta|}\bigg)^2, 
\]
it follows from \eqref{eq:skewed-3} that
\begin{equation}
|\Lop[f](\zeta)|
\le
\frac{C_1}{\sigma}\big(A(q,\theta_1) \sigma^{-\frac{q}{2(1-q)}}+B(q)\big),
\qquad \zeta\in\Omega_2\cap\Omega_3,
\label{eq:skewed-4}
\end{equation}
 holds, where $\Omega_3$ denotes the domain in the 
 half-plane $\C_{\Re>0}$ where
 \begin{equation}
 D(q,\theta_1)< E(q,\theta_2)\,
 \bigg(\frac{\sigma}{|\zeta|}\bigg)^{\frac{2q}{1-q}},
 \end{equation}
 that is, where 
 \begin{equation}
 \frac{\sigma^2}{|\zeta|^2}> 
 \bigg(\frac{D(q,\theta_1)}{E(q,\theta_2)}\bigg)^{\frac{1}{q}-1}=
 {(q\theta_1)^{\frac1q}}{(q''\theta_2)^{\frac{1}{q''}}},
 \end{equation}
 which is an angular opening with apex at the origin. By assumption, we 
 have that
\[ 
{(q\theta_1)^{\frac1q}}{(q''\theta_2)^{\frac{1}{q''}}}<\sin^2\frac{\pi}{2q}=
\cos^2\frac{\pi(1-q)}{2q}<1,
\]
so that $D(q,\theta_1)<E(q,\theta_2)$ and hence we have exponential decay
at the given rate along the real line as $\sigma\to0^+$ in \eqref{eq:skewed-2}. 
If we write
\[
{(q\theta_1)^{\frac1q}}{(q''\theta_2)^{\frac{1}{q''}}}=\cos^2\frac{\pi\alpha}{2},
\quad\text{where}\quad \frac{1-q}{q}<\alpha<1,
\]
then $\alpha=\alpha(q,\theta_1,\theta_2)$, and the region 
$\Omega_3$ is an angular opening toward
the right, centered at the origin, symmetric about the real line, with 
aperture $\pi\alpha$. Under the change-of-variables $\zeta=\eta^\alpha$, then
$\zeta\in \Omega_3$ if and only if $\eta\in\C_{\Re>0}$. Here,
the power is defined based on the principal branch of the logarithm.
We put
\[
F(\eta):=\Lop[f](\eta^\alpha),
\]
which is then holomorphic in the symmetric angular opening of aperture 
$\pi/\alpha$, and hence in particular in the right half-plane $\C_{\Re>0}$.
It follows from the estimate \eqref{eq:skewed-4} that uniformly,
\begin{equation}
|F(\eta)|
=
\Ordo\big(|\eta|^{-{\alpha}}\big(|\eta|^{-\frac{\alpha q}{2(1-q)}}+1)\big)\big),
\qquad \eta\in\C_{\Re>0}\cap\tilde\Omega_2, 
\label{eq:skewed-5}
\end{equation}
provided that $\eta\in\tilde\Omega_2$ means that 
$\zeta=\eta^\alpha\in\Omega_2$.
At the same time, we have exponential decay in accordance with 
\eqref{eq:skewed-2}:
\begin{equation}
|F(\rho)|
=
\Ordo\big(\rho^{-\alpha}\big(\rho^{-\frac{\alpha q}{2(1-q)}}+1\big)
\exp\Big((D(q,\theta_1)-E(q,\theta_2))\,\rho^{-\frac{\alpha q}{1-q}}\Big)\Big),
\label{eq:skewed-6}
\end{equation}
uniformly over $\rho\in\R_{>0}$. Since $D(q,\theta_1)<E(q,\theta_2)$ and
$\alpha q/(1-q)>1$, it is an immediate consequence of \eqref{eq:skewed-6}
that
\begin{equation}
\lim_{\rho\to0^+}\rho\log|F(\rho)|=-\infty.
\label{eq:skewed-7}
\end{equation}
If $F$ were bounded in the half-plane $\C_{\Re>0}$, we could then immediately
invoke Theorem \ref{thm:NT-1} and obtain that $F=0$ holds identically.  But
all we have is the estimate \eqref{eq:skewed-5}, which allows for slight growth
at the boundary point $0$. It is now easy to supply an outer function $G$ in
the domain $\C_{\Re>0}\cap\tilde\Omega_2$ whose modulus of the boundary values
equal those of the right-hand side estimate:
\[
|G(\eta)|=\min\big\{1,|\eta|^{\alpha},|\eta|^{\alpha+\frac{\alpha q}{2(1-q)}}\big\},
\qquad\eta\in\partial(\C_{\Re>0}\cap\tilde\Omega_2),
\]
in which case $G$ is nontrivial and bounded, and more importantly, the product
$FG$ is bounded too, and has 
\begin{equation}
\lim_{\rho\to0^+}\rho\log|(FG)(\rho)|=-\infty
\end{equation}
as a consequence of \eqref{eq:skewed-7}. Now the geometry of the simply connected
domain $\C_{\Re>0}\cap\tilde\Omega_2$ is such that it looks like a half-plane
near the origin, and then, the conformal map $\phi:\D\to\C_{\Re>0}\cap\tilde\Omega_2$
which maps $\phi(1)=0$ extends conformally across the point $1$, so that the
conditions of Theorem \ref{thm:NT-1} are all met, and we conclude that $FG=0$. 
Since $G$ is nontrivial, it follows that
$F=0$ identically on $\C_{\Re>0}$, which gives that $\Lop[f]=0$ and hence $f=0$.
This completes the proof of the theorem.
\end{proof}

\section{Uniqueness in the critical growth rate case $q=\frac12$}
\label{sec:critical-1}

\subsection{The setting of critical growth $q=\frac12$}
As our starting point, we have a solution to the Klein-Gordon equation 
$u''_{xy}+u=0$ in the spacelike quarter-plane $\R_{>0}\times\R_{<0}$ which
extends continuously to the closed quarter-plane $\R_{\ge0}\times\R_{\le0}$,
with vanishing vertical boundary values:
\[
u(0,y)=0,\qquad y\in\R_{\le0}.
\]
As explained in the proof of Theorem \ref{thm:main-1-0}, $u=\Rop[f,0]$ if the
continuous function $f$ is given by $f(x)=u(x,0)$. We consider $q=\frac{1}{2}$,
and suppose that the growth of $u$ is controlled by
\begin{equation}
|u(x,y)|\le C_0 \exp\big(\theta_1 x^{\frac12}+\beta(y)),\qquad
(x,y)\in\R_{\ge0}\times\R_{\le0},
\label{eq:beta-1}
\end{equation}
where $\beta(y)$ is the function 
\[
\beta(y)=|y|\exp(\theta_2\sqrt{|y|}),
\]
which even, and on $\R_{\ge0}$ it is strictly increasing and strictly convex, by inspection 
of the first and second derivatives.

\begin{proof}[Proof of Theorem \ref{thm:main-3}]
Since we easily establish that
\[
\exp(\theta_2\sqrt{|y|})\le \e^{\theta_2}+|y|\exp(\theta_2\sqrt{|y|})=\beta(y)+\Ordo(1),
\qquad y\in\R,
\]
the assumption of the theorem entails that we are in the setting of the inequality
\eqref{eq:beta-1}.
We then apply the Fourier-Laplace transform with respect to $x$:
\[
\Lop[u(\cdot,y)](\zeta)=\int_0^{+\infty}\e^{-x\zeta}u(x,y)\,\diff x,\qquad 
\zeta\in\C_{\Re>0},
\]
which defines a holomorphic function in the given half-plane.
Moreover, in view of
the bound \eqref{eq:beta-1}, we have 
\begin{equation}
|\Lop[u(\cdot,y)](\zeta)|\le C_0 \exp(\beta(y))\int_0^{+\infty}
\e^{\theta_1 x^{\frac12}-x\Re\zeta}\,\diff x,\qquad
(\zeta,y)\in\C_{\Re>0}\times\\R_{\le0},
\label{eq:beta-2}
\end{equation}
and since 
\[
\Lop[u(\cdot,y)](\zeta)=\e^{-y/\zeta}\Lop[f](\zeta),
\]
we derive that
\begin{equation}
|\Lop[f](\zeta)|\le C_0 \exp\big(\beta(y)+y\Re(1/\zeta)\big)\int_0^{+\infty}
\e^{\theta_1 x^{\frac12}-x\Re\zeta}\,\diff x,\qquad
\zeta\in\C_{\Re>0},
\label{eq:beta-3}
\end{equation}
where $y\in\R_{\le0}$ is arbitrary. If we write
\[
\beta^\ast(t):=\sup\{yt-\beta(y):\,\,\,y\in\R_{\ge0}\}\ge0,
\]
the result is another convex function that is, in the sense of Legendre,
the convex conjugate to $\beta$ itself. By performing asymptotic calculations, 
we can establish that
\begin{equation}
\beta^\ast(t)=\frac{1}{\theta_2^2}\,t\log^2 t-\frac{2}{\theta_2^2}\,t\log t\log\log t
+\Ordo(t\log t)
\label{eq:beta-3.1}
\end{equation}
holds asymptotically as $t\to+\infty$.
It now follows from \eqref{eq:beta-3} that
\begin{equation}
|\Lop[f](\zeta)|\le C_0 \exp\big(-\beta^\ast(\Re(1/\zeta))\big)\int_0^{+\infty}
\e^{\theta_1 x^{\frac12}-x\Re\zeta}\,\diff x,\qquad
\zeta\in\C_{\Re>0},
\label{eq:beta-4}
\end{equation}
and if we use Lemma \ref{lem:basic-est-1} to estimate the integral, we find that
\[
\int_0^{+\infty}\e^{\theta_1 x^{\frac12}-x\sigma}\,\diff x\le
\frac{1}{\sigma}\big(2^{3/4}\sqrt{\pi}\theta_1\sigma^{-\frac12}+8\big)
\exp\bigg(\frac{\theta_1^2}{4\sigma}\bigg),
\]
so that if we write $\Re\zeta=\sigma$, then
\begin{equation}
|\Lop[f](\zeta)|\le C_0 
\frac{1}{\sigma}\big(2^{3/4}\sqrt{\pi}\theta_1\sigma^{-\frac12}+8\big)
\exp\bigg(-\beta^\ast(\Re(1/\zeta))+
\frac{\theta_1^2}{4\sigma}\bigg),
\label{eq:beta-5}
\end{equation}
for $\Re\zeta=\sigma>0$. Since the point we are interested in is actually the
origin $\zeta=0$, which after inversion becomes the point at infinity, it makes
sense to consider the function
\[
F(\zeta):=\Lop[f](1/\zeta),\qquad\zeta\in\C_{>0},
\]
for which the estimate \eqref{eq:beta-5} becomes
\begin{equation}
|F(\zeta)|\le C_0 
\frac{|\zeta|^2}{\sigma}\big(2^{3/4}\sqrt{\pi}\theta_1|\zeta|\sigma^{-\frac12}+8\big)
\exp\bigg(-\beta^\ast(\sigma)+
\frac{\theta_1^2|\zeta|^2}{4\sigma}\bigg),\qquad\zeta\in\C_{\Re>0}.
\label{eq:beta-6}
\end{equation}
If we consider the region where
\[
\Omega_1:=\bigg\{\zeta\in\C_{\Re>0}:\,\,\,
\sigma\beta^\ast(\sigma)>\frac{\theta_1^2|\zeta|^2}{4}
\bigg\},
\]
then
\begin{equation}
|F(\zeta)|\le C_0 
\frac{|\zeta|^2}{\sigma}\big(2^{3/4}\sqrt{\pi}\theta_1|\zeta|\sigma^{-\frac12}+8\big),
\qquad\zeta\in\Omega_1,
\label{eq:beta-7}
\end{equation}
while 
\begin{equation}
|F(\sigma)|\le C_0 
\big(2^{3/4}\sqrt{\pi}\theta_1\sigma^{\frac32}+8\sigma\big)
\exp\bigg(-\beta^\ast(\sigma)+
\frac{\theta_1^2\sigma}{4}\bigg),\qquad 0<\sigma<+\infty.
\label{eq:beta-8}
\end{equation}
We factor $\beta^\ast(t)=t\alpha^\ast(t)$, and see that the condition
defining the set $\Omega_1$ is
\[
\frac{|\zeta|^2}{\sigma^2}<\frac{4}{\theta_1^2}\alpha^\ast(\sigma),\qquad 
\zeta\in\Omega_1,
\]
that is,
\[
\frac{\sigma}{|\zeta|}>\frac{\theta_1}{2}\,\alpha^\ast(\sigma)^{-\frac12},
\qquad \zeta\in\Omega_1.
\]
In the complement, we have instead
\[
\frac{\sigma}{|\zeta|}\le\frac{\theta_1}{2}\,\alpha^\ast(\sigma)^{-\frac12},
\qquad \zeta\in\C_{\Re>0}\setminus\Omega_1.
\]
Moreover, in view of \eqref{eq:beta-3.1}, we see that
\begin{equation}
\alpha^\ast(\sigma)^{-\frac12}=\frac{\theta_2}{\log\sigma}-
\theta_2\frac{\log\log \sigma}{\log^2\sigma}+\Ordo\bigg(\frac{1}{\log^2\sigma}\bigg)
\label{eq:beta-9}
\end{equation}
as $\sigma\to+\infty$. Along the boundary $\partial\Omega_1$, we have equality,
\[
\frac{\sigma}{|\zeta|}=\frac{\theta_1}{2}\,\alpha^\ast(\sigma)^{-\frac12}
=\frac{\theta_1\theta_2}{2}\bigg(\frac{1}{\log\sigma}+\frac{\log\log\sigma}{\log^2\sigma}
+\Ordo(1)\bigg),
\qquad \zeta\in\partial\Omega_1,
\]
where we used the asymptotics in \eqref{eq:beta-9}, so that
\[
\rho:=|\zeta|=\frac{2}{\theta_1}\sigma\alpha^\ast(\sigma)^{\frac12}=\frac{2\sigma}
{\theta_1\theta_2}\big(\log\sigma+\log\log\sigma+\Ordo(1)\big),
\]
as $\sigma\to+\infty$, 
which relationship between $\sigma$ and $\rho$ we may invert, asymptotically:
\[
\sigma=\frac{\theta_1\theta_2}{2}\bigg(\frac{\rho}{\log\rho}+
\frac{\rho\log\log\log\rho}{\log^2\rho}+\Ordo\bigg(\frac{\rho}{\log^2\rho}\bigg)
\bigg)
=\frac{\theta_1\theta_2}{2}\frac{\rho}{\log\rho}\big(1+\ordo(1)\big),
\]
so that
\[
\frac{\sigma}{\rho}=\frac{\theta_1\theta_2}{2\log\rho}\,\big(1+\ordo(1)\big),
\]
as $\rho\to+\infty$.
This means that the symmetric opening angle of $\Omega_1$ on the radius 
$|\zeta|=\rho$ is $\pi-2\epsilon$, where 
\begin{equation}
\sin\epsilon=\frac{\theta_1\theta_2}{2\log\rho}\big(1+\ordo(1)\big),
\label{eq:beta-10}
\end{equation}
and the asymptotics is as $\rho\to+\infty$. Let us consider a slightly smaller domain
$\Omega_2\subset \Omega_1$ which is simply connected and starlike, given as
\[
\Omega_2=\{r\,\e^{\imag\varphi}:\,\,\,\,|\varphi|<\tfrac12\pi-\epsilon(r)\},
\]
where $\epsilon:\,\R_{\ge0}\to[0,\frac12\pi]$ is a decreasing function with 
$\epsilon(r)=\frac{1}{2}\pi$ for $0\le r\le r_0$ with some $r_0>0$, 
and the asymptotic decay rate
\[
\epsilon(r)=\frac{\theta_3}{2\log r},\qquad R\le r<+\infty,\qquad 
\theta_1\theta_2<\theta_3<2\pi.
\]
Note that this agrees with the asymptotic angle dependence on 
the radius in equation \eqref{eq:beta-10}, and the containment 
$\Omega_2\subset\Omega_1$.
Next, suppose $P_\infty$ is a positive harmonic function
in $\Omega_2$ which vanishes along the boundary $\partial\Omega_2$.
Projectively, there is only one such function, which we understand as meaning 
that any other such function is the positive constant multiple of the first function
we found. We may think of $P_\infty$ as the Poisson kernel for the point at infinity,
considered as a boundary point of $\Omega_2$. The classical Ahlfors-Warschawski
theorem \cite{Warsch} characterizes geometrically the growth of the function $P_\infty$ 
along the positive real line. Indeed,
\[
P_\infty(\sigma)\asymp \exp\bigg(\int_{r_1}^{\rho}\frac{\pi}{\pi-2\epsilon(r)}\,
\frac{\diff r}{r}\bigg)
\]
in the asymptotic sense as $\sigma\to+\infty$, and where the radius $r_1$ is selected 
such that, e.g., $\epsilon(r_1)=\frac{1}{4}\pi$.
We then calculate that if $R\ge r_1$ is fixed, we obtain
\begin{multline}
\int_R^{\rho}\frac{\pi}{\pi-2\epsilon(r)}\frac{\diff r}{r}=
\int_{R}^{\rho}\bigg(1+\frac{2}{\pi}\epsilon(r)+\Ordo(\epsilon(r)^2)\bigg)\frac{\diff r}{r}
\\
=\int_{R}^{\rho}\bigg(1+\frac{1}{\pi}\frac{\theta_3}{\log r}+\Ordo((\log r)^{-2})\bigg)
\frac{\diff r}{r}
\\
=\log\rho-\log R+\frac{\theta_3}{\pi}\big(\log\log\rho-\log\log R\big)+\Ordo(1),
\end{multline}
as $\rho\to+\infty$. Since $R$ is kept fixed, we may replace $r_1$ by $R$ in the
Ahlfors-Warschawski theorem, so that
\begin{equation}
P_\infty(\sigma)\asymp\exp\bigg(\int_{R}^{\rho}\frac{\pi}{\pi-2\epsilon(r)}\,
\frac{\diff r}{r}\bigg)
\asymp\frac{\rho}{R}\bigg(\frac{\log\rho}{\log R}\bigg)^{\frac{\theta_3}{\pi}}
\label{eq:beta-11}
\end{equation}
as $\rho\to+\infty$. 
We now return to the estimates \eqref{eq:beta-7} and \eqref{eq:beta-8}.
The growth bound \eqref{eq:beta-7}, which automatically holds on $\Omega_2$,
together with the simple geometry of $\Omega_2$, gives that there exists an outer
function on $\Omega_2$, such that
\[
|G(\zeta)|=\min\bigg\{\frac{\sigma^{\frac32}}{|\zeta|^3},\frac{\sigma}{|\zeta|^2}\bigg\},
\qquad\zeta\in\partial\Omega_2.
\]
By \eqref{eq:beta-7}, we have that 
\[
\big|F(\zeta)G(\zeta)\big|\le C_1,\qquad \zeta\in\Omega_1,
\] 
for some constant $C_1$. Moreover, by \eqref{eq:beta-8}, we also have
\begin{equation}
\big|F(\zeta)G(\zeta)\big|\le C_2\,\exp\bigg(-\beta^\ast(\sigma) 
+\frac{\theta_1|\zeta|^2}{4\sigma}\bigg) ,\qquad \zeta\in\Omega_1,
\label{eq:beta-12}
\end{equation}
for some positive constant $C_2$. Since we know that
\[
\beta^\ast(\sigma)=\frac{1}{\theta_2^2}(1+\ordo(1))\,\sigma\log^2\sigma,
\]
it now follows from this in combination with \eqref{eq:beta-11} that for each 
$\lambda\in\R_{>0}$,
\begin{equation}
\big|{F(\sigma)}{G(\sigma)}\big|=\Ordo
\big(\exp\big(-\lambda P_\infty(\sigma)\big)\big)  \,\,\,\text{as}\,\,\,
\sigma\to+\infty.
\label{eq:beta-13}
\end{equation}
This is basically (after a suitable conformal mapping) the same as the setting 
of Theorem \ref{thm:NT-1}, and it follows that $FG=0$ and hence $F=0$ 
and equivalently $f=0$ on $\R_\ge0$. Consequently, 
$u=\Rop[f,0]=\Rop[0,0]=0$ on $\R_{\ge0}
\times\R_{\le0}$.
\end{proof}

\section{Appendix on gluing and on Riemann's solution}
\label{sec:appendix-1}

\subsection{Gluing along characteristic lines}
We turn to the proof of Theorem \ref{thm:glue-1}.

\begin{proof}[Proof of Theorem \ref{thm:glue-1}]
The assertion is trivial unless $\Omega\cap L\neq\emptyset$, so
we will make this assumption.
In view of the stated characterization of continuous solutions to the 
Klein-Gordon equation in terms of the quadrature identity 
\eqref{eq:quadrature-1} for any rectangle $[a,a']\times[b,b']$ contained
in $\Omega$, we need to check that 
\begin{equation}
\int_{a}^{a'}\int_{b}^{b'}u(s,t)\diff t\diff s=u(a,b')+u(a',b)-u(a,b)-u(a',b').
\label{eq:quadrature-2.1}
\end{equation}
By Lorentz group invariance,  it is sufficient to handle the instance when
$L$ is a horizontal line $u=b''$. The distributional interpretation of 
continuous solutions to the Klein-Gordon equation gives that we know 
that \eqref{eq:quadrature-2.1} holds for all 
$[a,a']\times[b,b']\subset\Omega\setminus L$, that is, when 
$[a,a']\times[b,b']\subset\Omega$ and $b''\notin[b,b']$. In a next step,
we may use the continuity of $u$ to allow $b$ or $b'$ to approach the
value $b''$, which establishes \eqref{eq:quadrature-2.1} for
$[a,a']\times[b,b']\subset\Omega$ and $b''\notin]b,b'[$. Finally, when
$b''\in]b,b'[$, we split $[b,b']=[b,b'']\cup[b'',b']$,  and since,
by the argument we already carried out, 
\begin{equation}
\int_{a}^{a'}\int_{b}^{b''}u(s,t)\diff t\diff s=u(a,b'')+u(a',b)-u(a,b)-u(a',b'')
\label{eq:quadrature-2.2}
\end{equation}
and likewise
\begin{equation}
\int_{a}^{a'}\int_{b''}^{b'}u(s,t)\diff t\diff s=u(a,b')+u(a',b'')-u(a,b'')-u(a',b').
\label{eq:quadrature-2.3}
\end{equation}
Adding up these two identities, it follows that \eqref{eq:quadrature-2.1}
holds in this instance as well.
The assertion of the theorem now follows from the general quadrature 
identity \eqref{eq:quadrature-2.1}.
\end{proof}

\subsection{Checking the solution formula}
We first verify that the purported solution attributed to Riemann actually
solves the Darboux-Goursat problem.

\begin{proof}[Proof of Theorem \ref{thm:classical-1}]
It is an easy consequence of the formula defining $u=\Rop[f,g]$ that $u$
gets to be continuous. Moreover, the values along the axis are as required,
in accordance with the calculations
\eqref{eq:Goursat-1} and \eqref{eq:Goursat-2}.

It remains to check that $u=\Rop[f,g]$ solves the Klein-Gordon equation
$u''_{xy}+u=0$ in the sense of distribution theory. To this end, the
first step is to realize that
\[
\partial_x\partial_y(f(x)+g(y))=0
\]
holds in the sense of distribution theory. It now follows from the definition
of Riemann's solution that in the sense of distribution theory,
\begin{multline}
\partial_x\partial_y u(x,y)=\partial_x\partial_y(f(x)+g(y))
-f(0)\partial_x\partial_y J_{0,0}(x,y)
\\
-\partial_x\partial_y\int_0^x J_{0,1}(x-s,y)f(s)\,\diff s
-\partial_x\partial_y\int_0^y J_{1,0}(x,y-t)g(t)\,\diff t
\\
=f(0)J_{0,0}(x,y)-\partial_x\partial_y\int_0^x J_{0,1}(x-s,y)f(s)\,\diff s
-\partial_x\partial_y\int_0^y J_{1,0}(x,y-t)g(t)\,\diff t,
\label{eq:step1}
\end{multline}
where we applied the fact that the function $J_{0,0}$ solves the Klein-Gordon
equation itself. Next, by Calculus, we have that
\begin{multline}
\partial_x\int_0^x J_{0,1}(x-s,y)f(s)\,\diff s=J_{0,1}(0,y)f(x)+\int_0^x
\partial_x J_{0,1}(x-s,y) f(s)\,\diff s
\\
=f(x)y+\int_0^x
\partial_x J_{0,1}(x-s,y) f(s)\,\diff s,
\end{multline}
and, consequently,
\begin{multline}
\partial_x\partial_y\int_0^x J_{0,1}(x-s,y)f(s)\,\diff s=
\\
=f(x)+\int_0^x
\partial_x\partial_y J_{0,1}(x-s,y) f(s)\,\diff s
=f(x)-\int_0^x J_{0,1}(x-s,y) f(s)\,\diff s.
\label{eq:step2}
\end{multline}
The analogous calculation switching the roles of $x$ and $y$ gives that
\begin{equation}
\partial_x\partial_y\int_0^y J_{1,0}(x,y-t)g(s)\,\diff t
=g(y)-\int_0^y J_{1,0}(x,y-t) g(t)\,\diff t.
\label{eq:step3}
\end{equation}
It now follows by a combination of \eqref{eq:step1}, \eqref{eq:step2}, an
\eqref{eq:step3} that
\begin{multline}
\partial_x\partial_y u(x,y)=f(0)J_{0,0}(x,y)-f(x)-g(y)+\int_0^x J_{1,0}(y,x-s)f(s)
\,\diff s
\\
+\int_0^y J_{1,0}(x,y-t)g(t)\,\diff t=  -u(x,y).
\end{multline}
This means that $u=\Rop[f,g]$ solves the Klein-Gordon equation $u''_{xy}+u=0$
in the sense of distribution theory, which completes the proof.
\end{proof}

\subsection{Obtaining the uniqueness of Riemann's solution}
We now supply the proof of Theorem \ref{thm:classical-2}, based on the 
classical Picard method.

\begin{proof}[Proof of Theorem \ref{thm:classical-2}]
By considering the difference $v=u-\Rop[f,g]$, we have a continuous solution
to the Klein-Gordon equation $v''_{xy}+v=0$ with $v(x,0)=0$ and $v(0,y)=0$.    
By the quadrature identity \eqref{eq:quadrature-1.1} with $v$ in place of $u$,
\begin{equation}
v(x,y)=-\int_0^{x}\int_{0}^{y}v(s,t)\,\diff t\diff s.
\label{eq:quadrature-2}  
\end{equation}
Let $Q$ denote the rectangle $a\le x\le a'$, $b\le y\le b'$, where the
parameters are chosen such that the origin $(0,0)$ is in $Q$. In view of
\eqref{eq:quadrature-2}, we have
\begin{equation}
  |v(x,y)|\le|xy|\,\|v\|_{L^\infty(Q)},\qquad (x,y)\in Q, 
\label{eq:quadrature-3}  
\end{equation}
so that if the rectangle $Q$ is not too big, more precisely,
\[
A:=\max\{|a|,a'\}\,\max\{|b|,b'\}<1,
\]
we find that
\[
\|v\|_{L^\infty(Q)}\le A\|v\|_{L^\infty(Q)}, 
\]
which forces $\|v\|_{L^\infty(Q)}=0$, that is, $v=0$ holds on $Q$. But then we
can do a horizontal translation $(x,y)\mapsto (x+\beta,y)$
(for a small $\beta$, positive or negative) and apply the same argument
again after the translation. Iteratively, we find that $v=0$ holds in the whole
strip $-\infty<x<+\infty$, $b\le y\le b'$. After that we apply instead vertical
translations and obtain that $v=0$ throughout. 
\end{proof}

\begin{proof}[Proof of Theorem \ref{thm:Picard-2}]
As in the proof of Theorem \ref{thm:classical-2} , we have, by the quadrature 
identity \eqref{eq:quadrature-1.1},
\begin{equation}
u(x,y)=-\int_0^{x}\int_{0}^{y}u(s,t)\,\diff t\diff s, \qquad a\le x\le a',\,\,
b\le y\le b',
\label{eq:quadrature-2}  
\end{equation}
which puts us in the same setting as in the proof of Theorem 
\ref{thm:classical-2}. The conclusion that $u=0$ on $[a,a']\times[b,b']$
follows in an analogous fashion. 
\end{proof}  
  
\subsection*{Acknowledgements}
The 
author acknowledges support from Vetenskaps\-r\aa{}det 
(VR grant 2020-03733), and by grant 075-15-2021-602
of the Government of the Russian Federation for the state support of
scientific research, carried out under the supervision of leading
scientists.

\end{document}